\documentclass[11pt]{amsart}
\setlength{\textwidth}{14.5cM}
\setlength{\textheight}{21cM}
\topmargin 1cM \oddsidemargin 1.5cM \evensidemargin 1.5cM
\baselineskip=18pt


\usepackage{tikz,tikz-cd}
\usepackage{tikz}
\usetikzlibrary{shapes.geometric,positioning}
\usetikzlibrary{arrows,decorations.pathmorphing,decorations.pathreplacing}
\usetikzlibrary{matrix,arrows.meta}
\usepackage{tkz-graph}
\usepackage[all]{xy}
\usepackage{fancyvrb}
\usepackage[normalem]{ulem}
\usepackage{xspace}
%
%
%
%
%
%
%
%

\newcommand{\txn}[1]{\textnormal{#1}}
\newcommand{\rsphere}{\mathbb{C}\mathbb{P}^1}
\newcommand{\algbr}{\overline{\mathbb{Q}}}
\newcommand{\absgal}{\txn{Gal}(\algbr/\mathbb{Q})}
\newcommand{\inv}[1]{#1^{-1}}
\newcommand{\gal}[1]{\textnormal{Gal}(#1/\mathbb Q)}
\newcommand{\per}{(\sigma,\alpha,\varphi)}

\newcommand{\belyi}{Bely\u{\i}\xspace}

\newcommand{\zoi}{\{0,1,\infty\}}
\usepackage{color}
\definecolor{blue}{rgb}{0.0, 0.0, 1.0}
\definecolor{candyapplered}{rgb}{1.0, 0.03, 0.0}
\usepackage{graphicx}
\usepackage[all]{xy}
\usepackage{placeins}
\usepackage{enumitem}
\usepackage{amssymb}
\usepackage{latexsym}
\usepackage{amsmath}
\usepackage{mathrsfs}
\usepackage{array,booktabs}
\usepackage{verbatim}
\usepackage{color}
\usepackage[normalem]{ulem}
\usepackage{float}

\usepackage{soul}



\usepackage{color}
\definecolor{teal}{rgb}{0.0, 0.5, 0.5}
\definecolor{tealblue}{rgb}{0.21, 0.46, 0.53}
\definecolor{tealgreen}{rgb}{0.0, 0.51, 0.5}
\definecolor{tuscanred}{rgb}{0.51, 0.21, 0.21}
\definecolor{sangria}{rgb}{0.57, 0.0, 0.04}
\definecolor{rufous}{rgb}{0.66, 0.11, 0.03}
\definecolor{pinegreen}{rgb}{0.0, 0.47, 0.44}
\definecolor{darkscarlet}{rgb}{0.34, 0.01, 0.1}
\definecolor{darkseagreen}{rgb}{0.56, 0.74, 0.56}
\definecolor{darkpastelred}{rgb}{0.76, 0.23, 0.13}
\definecolor{darkpink}{rgb}{0.91, 0.33, 0.5}
\definecolor{darkpastelblue}{rgb}{0.47, 0.62, 0.8}
\definecolor{alizarin}{rgb}{0.82, 0.1, 0.26}
\definecolor{candyapplered}{rgb}{1.0, 0.03, 0.0}





\usepackage{hyperref}

\newcommand{\hyref}[2]{ \hyperref[#2]{#1~\ref*{#2}} }



\theoremstyle{plain}
\newtheorem{theorem}{Theorem}[section]

\newtheorem{lemma}[theorem]{Lemma}

\newtheorem{proposition}[theorem]{Proposition}

\theoremstyle{definition}
\newtheorem{remark}[theorem]{Remark}

\newtheorem{example}[theorem]{Example}
\newtheorem{definition}[theorem]{Definition}

\newtheorem{conv}{Convention}




\DeclareMathAlphabet{\mathpzc}{OT1}{pzc}{m}{it}



\newcommand{\cL}{\mathcal{L}}





\renewcommand{\setminus}{\backslash}


\newcommand{\arr}{\ar@{-}[r]}




\renewcommand{\phi}{\varphi}
\renewcommand{\epsilon}{\varepsilon}


\begin{document}

\date{\today}

\parskip7pt
\parindent0pt

\hyphenation{Gro-then-di-eck Pro-gra-mme Ma-the-mat-i-cal en-fan-ts}

\title{Dessins d'enfants and Brauer configuration algebras}

\thanks{This work has been supported through the EPSRC Early Career Fellowship EP/P016294/1 for the second author and the University of Leicester.}
\subjclass[2000]{ Primary: 
16P10,  
11G32 
	14H57 
}
\keywords{Dessins d'enfants, Galois invariant, absolute Galois group, finite dimensional algebra, Brauer configuration algebra}

\author{Goran Mali\'c}
\address{Department of Computer Science, Smith College, Northampton, MA 01063, USA}
\email{goranm00@gmail.com}
\author{Sibylle Schroll} 
\address{Department of Mathematics, University of Leicester, University Road, Leicester LE1 7RH, United Kingdom}
\email{schroll@leicester.ac.uk}

\begin{abstract}
In this paper we associate to a dessin d'enfant an associative algebra, called a Brauer configuration algebra. This is an algebra given by quiver and relations induced by the monodromy of the dessin d'enfant.  We show that the dimension of the  Brauer configuration algebra associated to a dessin d'enfant and the dimension of the centre this algebra are invariant under the action of the absolute Galois group.  We give some examples of well-known algebras and their dessins d'enfants. Finally we show that the Brauer configuration algebras of a dessin d'enfant and its dual share the same path algebra. 
\end{abstract}

\maketitle


\section{Introduction}

In this paper we associate a newly defined class of finite dimensional algebras called Brauer configuration algebras \cite{GreenSchrollBrauerConfig} to a dessin d'enfant. This new algebraic structure generalises Brauer graph algebras, a well-understood class of algebras of tame representation type. Brauer graph algebras arise from clean dessins d'enfants, that is, those dessins d'enfants for which the ramification indices above 1 are all equal to 2; by relaxing this restriction we obtain Brauer configuration algebras. The connection between Brauer graph algebras and clean dessins d'enfants is established in \cite{MS2}. 

There is an action of the absolute Galois group $\rm{Gal}(\overline{\mathbb{Q}}/\mathbb{Q})$ on Brauer configuration algebras induced by the natural action of $\rm{Gal}(\overline{\mathbb{Q}}/\mathbb{Q})$ on dessins d'enfants. We prove that this action keeps invariant certain algebraic data, such as the dimension of the Brauer configuration algebra $A$ associated to a dessin d'enfant, as well as the dimension of the centre of $A$. More invariants in the special case of clean dessins d'enfants and Brauer graph algebras are shown in \cite{MS2}. Our hope is that the results in this paper will open the theory of dessins d'enfants to the techniques of representation theory and homological algebra with the idea of  eventually leading to new Galois invariants.

A dessin d'enfant (dessin for short) is a classical combinatorial object given by a cellular embedding of a bipartite graph on a connected, closed and orientable surface. However, a dessin induces on its underlying surface the structure of an algebraic curve defined over the algebraic numbers $\algbr$ and conversely, any algebraic curve over $\algbr$ corresponds to a dessin. Consequently, there is a natural faithful action of the absolute Galois group $\absgal$ on the set of dessins; both results are due to \belyi \cite{belyi80}. Dessins were introduced by Grothendieck in his influential Esquisse d'un Programme \cite{Grothendieck} where they are considered as a starting point to the study of the structure of $\absgal$ as the automorphism group of a tower of so-called Teichm\"uller groupoids of moduli spaces of curves in any genus. One of the main goals of the theory of dessins d'enfants is to understand the invariants of this action and consequently to be able to distinguish between any two orbits. Several invariants that distinguish between certain orbits are known, but a complete answer does not seem to be within reach.

Brauer configuration algebras are a new class of associative symmetric algebras (equipped with a non-degenerate symmetric form) generalising  Brauer graph algebras \cite{GreenSchrollBrauerConfig}. Brauer graph algebras originate in the modular representation theory of finite groups \cite{Janusz} and their representation theory is well understood. While Brauer graph algebras are of tame representation type, that is, the isomorphism classes of their indecomposable representations can be parametrised by finitely many one parameter families, Brauer configuration algebras are, in general, of wild representation type. An algebra $A$ is of wild representation type if for any finite dimensional algebra there exists a representation embedding into the module category of $A$, that is in some sense their representation theory contains the representation theory of any finite dimensional algebra.  It is generally believed that the finite dimensional representations of a wild algebra cannot be classified in a meaningful way. However, for the class of Brauer configuration algebras,  a precise structural description of all representations has been given in \cite{GreenSchrollAdvances}. Furthermore, the dimensions of both the Brauer configuration algebras \cite{GreenSchrollBrauerConfig} and of the zeroth Hochschild cohomology space of a Brauer configuration algebra \cite{Sierra} are known.

We start with a rapid introduction to both Grothendieck's theory of dessins d'enfants in Section \ref{dessins} and to associative algebras given by quivers and relations in Section 3.  In Section 4 we introduce Brauer configuration algebras defined in \cite{GreenSchrollBrauerConfig} and show how they naturally arise from  dessins d'enfants. In Section 5 we give two examples of well-known algebras that arise as Brauer configuration algebras of specific dessins d'enfants, the symmetric Nakayama algebras and quotients of preprojective algebras of type $\tilde{A}$.  In Section 6 we discuss the Brauer configuration algebra associated to  the dessin $D^*$ dual to $D$ and show that $D^*$ and $D$ have the same quiver when $D$ has no faces and black vertices of degree 1. 
In  Section 7 we discuss the Galois action on  Brauer configuration algebras, and show that the  centre of a Brauer configuration algebra as well as the  dimension of the  Brauer configuration algebra are invariant under the action of $\absgal$.

\textbf{ Acknowledgments: } The first author would like to thank the University of Leicester for hospitality during his stays at Leicester. 

\section{Dessins d'enfants}\label{dessins}

We start by briefly recalling Grothendieck's theory of dessins d'enfants. For a more detailed introduction to the topic see for example \cite{GGD, JonesWolfart, LandoZvonkin, schneps94}.  We closely follow the exposition in \cite{Malic1}.

A \emph{dessin d'enfant} (dessin for short) is a finite bipartite connected graph $G$ (with multiple edges allowed) embedded cellularly on a connected, closed and orientable surface $X$.  In this embedding the vertices of $G$ are points on $X$ coloured in black or white, the edges are curved segments on $X$ intersecting only at the vertices so that each edge ends in exactly one black and one white vertex, and the complement of the embedding is a finite union of connected components homeomorphic to an open 2-cell, which we call \emph{faces} of the dessin. Two dessins $(G_1,X_1)$ and $(G_2,X_2)$ are isomorphic if there is an orientation preserving homeomorphism $X_1\to X_2$ that restricts to a bipartite graph isomorphism $G_1\to G_2$.
 
Equivalently, a dessin is a pair $(X,f)$ where $X$ is a compact Riemann surface and $f\colon X \to\rsphere$ is a holomorphic ramified covering of the Riemann sphere, ramified over a subset of $\zoi$. The pair $(X,f)$ is also called a \emph{\belyi pair} and the map $f$ is called a \emph{\belyi map} or a \emph{\belyi function}. Two dessins $(X_1,f_1)$ and $(X_2,f_2)$ are isomorphic if they are isomorphic as ramified covers, i.e.\ if there is an orientation preserving homeomorphism $h\colon X_1 \to X_2$ such that $f_1=f_2\circ h$.

The equivalence between the two definitions is obtained as follows: given a dessin $(X,f)$, the preimage $f^{-1}([0,1])$ of the closed unit interval corresponds to a cellular embedding of a bipartite graph on the underlying topological surface of $X$ such that the black and white vertices correspond to $\inv f(0)$ and $\inv f(1)$ respectively, and the edges correspond to the preimages of the open unit interval.

Conversely, given a dessin on a topological surface $X$, add a single new vertex to  the interior of each face.  To distinguish it from the black and white vertices, we will represent these vertices by  diamonds $\diamond$. Now triangulate $X$ by connecting the diamonds with the black and white vertices that are on the boundaries of the corresponding faces. Following the orientation of $X$, call the triangles with vertices oriented as $\bullet$-$\circ$-$\diamond$-$\bullet$ positive, and call other triangles negative (see Figure \ref{triangulation}).
\begin{figure}[ht]
  \centering\includegraphics[scale=.5, trim={8cm 9.5cm 3.5cm 5cm}]{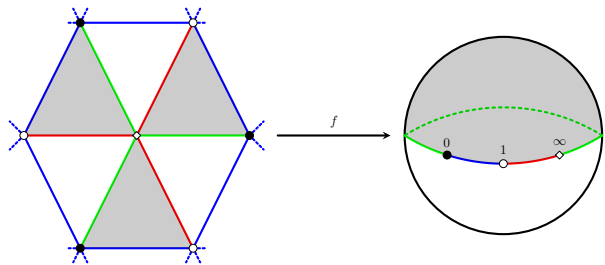}
  \caption{The positive (shaded) and negative triangles are mapped to the upper and lower-half plane, respectively. The sides of the triangles are mapped to $\mathbb R\cup\{\infty\}$ so that the black and white vertices map to 0 and 1, respectively, and the face centres map to $\infty$.}\label{triangulation}
\end{figure}

Now send the positive and negative triangles to the upper and lower half-plane of $\mathbb C$, respectively, and send the sides of the triangles to the real line so that black, white and diamond vertices are sent to $0$, $1$ and $\infty$, respectively. As a result, we obtain a ramified cover $f\colon X\to\rsphere$, ramified only over a subset of $\{0,1,\infty\}$. We now impose on $X$ the unique Riemann surface structure which makes $f$ holomorphic.

\subsection{A permutation representation of dessins}\label{subsection:perm_rep}

The monodromy action induced by lifts under $f$ of simple closed loops on $\rsphere$ based at $1/2$ and circling around 0 and 1 gives one further definition of dessins via group theory: a dessin is the conjugacy class of a 2-generated transitive subgroup $\langle\sigma,\alpha\rangle$ of $S_n$, where $n$ is the degree of $f$. Two dessins $\langle\sigma_1,\alpha_1\rangle$ and $\langle\sigma_2,\alpha_2\rangle$ are isomorphic if they are isomorphic as permutation groups. 

Throughout this section let $(X,f)$ be a dessin with $n$ half-edges (or, equivalently, such that $f$ is a degree $n$ covering). In this section we describe how each such dessin can be represented by a triple $\per$ of permutations in the symmetric group $S_{n}$. However, we first fix the following notation which we will keep throughout the paper.

\begin{conv}\label{conv::edge labels}We label the half-edges of a dessin with the elements of the set $\{1,\dots,n\}$ so that, when standing at a black vertex, and looking towards an adjacent white vertex, the label is placed on the `left side' of the half-edge. See Figure \ref{planargraph1} for an example.
\end{conv}
\begin{figure}[ht]
  \centering
	\includegraphics[scale=.7, trim={7cm 20.5cm 7cm 5cm}]{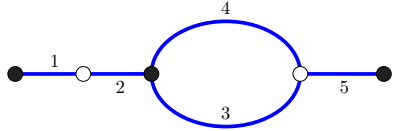}
  \caption{Labelling of half-edges. The labels are always on the left when looking from a black vertex to its adjacent white vertices.}\label{planargraph1}
\end{figure}

Following convention~\ref{conv::edge labels}, label the half-edges of a dessin arbitrarily. Now let $\sigma$ and $\alpha$ denote the permutations which record the cyclic (counter-clockwise) orderings of the labels around black and white vertices, respectively, and let $\varphi$ denote the permutation which records the counter-clockwise ordering of the labels within each face.
\begin{example}For the dessin in Figure \ref{planargraph1} we have $\sigma=(1)(2\,3\,4)(5)$, $\alpha=(1\,2)(3\,5\,4)$ and $\varphi=(1\,4\,5\,2)(3)$. The cycles of length 1 are usually dropped. Note that the cycle corresponding to the `outer face' is, from the reader's perspective, recorded clockwise. This does not violate our convention since the labels of that face should be viewed from the opposite side of the sphere and switch orientation once the face is unfolded into a disc.
\end{example}

A change of  labels corresponds to simultaneous conjugation of $\sigma$, $\alpha$ and $\varphi$ by some element in $S_n$. Therefore, any dessin can be represented, up to conjugation, by a triple of permutations.

\begin{definition}The length of a cycle in $\sigma$ or $\alpha$ corresponding to a black or a white vertex, respectively, is called the \emph{degree} of the vertex. The length of a cycle in $\varphi$ corresponding to a face is called the \emph{degree} of the face. Thus, the degree of a vertex is the number of half-edges incident to it, while the degree of a face is half the number of half-edges on its boundary.
\end{definition}

A triple $(\sigma,\alpha,\varphi)$ representing a dessin $D=(X,f)$ satisfies the following properties:
\begin{itemize}
  \item the group $\left\langle\sigma,\alpha,\varphi\right\rangle$ acts transitively on the set $\{1,\dots,n\}$ and
  \item $\sigma\alpha\varphi=1$.
\end{itemize}
The first property above is due to the fact that dessins are connected while the second is due to the following: consider three non-trivial simple loops $\gamma_0$, $\gamma_1$ and $\gamma_\infty$ in $\pi_1(\rsphere\setminus\{0, 1, \infty\},1/2)$ based at $1/2$ and going around 0, 1 and $\infty$ once, respectively. The lifts of these loops under $f$ correspond to paths on $X$ that start and end at a (possibly the same) point in $\inv f(1/2)$ . We observe the following.
\begin{itemize}
  \item Every half-edge of $D$ contains precisely one element of $\inv f(1/2)$ since $f$ is unramified at $1/2$.
  \item The cardinality of $\inv f(1/2)$ is precisely $n$. Hence there is a bijection $\inv f(1/2)\to\{1,\dots,n\}$.
  \item With respect to this bijection, $\sigma$, $\alpha$ and $\varphi$  can be thought of as permutations of $\inv f(1/2)$.
\end{itemize}
Therefore the loops $\gamma_0$, $\gamma_1$ and $\gamma_\infty$ induce $\sigma$, $\alpha$ and $\varphi$. Since the product $\gamma_0\gamma_1\gamma_\infty$ is trivial in $\pi_1(\rsphere\setminus\zoi,1/2)$, the corresponding permutation $\sigma\alpha\varphi$ must be trivial as well.

We have now seen that to every dessin with $n$ half-edges we can assign a triple of permutations in $S_n$ such that their product is trivial and the group that they generate acts transitively on the set $\{1,\dots, n\}$. 

In a similar way we can show that this assignment works in the opposite direction as well: given three permutations $\sigma$, $\alpha$ and $\varphi$ in $S_n$ such that $\sigma\alpha\varphi=1$ and such that the group that they generate acts transitively on $\{1,\dots,n\}$, we can construct a dessin with $n$ half-edges respecting the order imposed by the permutations $\sigma$, $\alpha$ and $\varphi$. Therefore, up to simultaneous conjugation, a dessin is uniquely represented by a transitive triple $(\sigma,\alpha,\varphi)$ with $\sigma\alpha\varphi=1$, and such a triple recovers a unique dessin up to isomorphism.

\begin{remark} Dessins correspond to $2$-generated transitive permutation groups since we can set $\varphi=\inv{(\sigma\alpha)}$. However, we prefer to emphasise all three permutations.
\end{remark}
We will use the notation $D=\per$ to denote that a dessin $D$ is represented by the triple $\per$.

\subsection{\belyi's theorem}\label{subsection:Belyi's theorem}
\belyi's theorem is the starting point of Grothendieck's remarkable Esquisse d'un Programme \cite{Grothendieck} in which he sketches an approach towards understanding  the absolute Galois group $\absgal$ over the rationals as an automorphism group of a certain topological object. We restate the theorem here.

\begin{theorem}[\belyi]\label{belyi}Let $X$ be a smooth projective algebraic curve defined over $\mathbb C$. Then $X$ is defined over $\algbr$ if and only if there is a holomorphic ramified covering $f\colon X\to\rsphere$, ramified at most over a subset of $\{0,1,\infty\}$.
\end{theorem}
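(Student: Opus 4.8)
This is a biconditional whose two halves are proved by entirely different techniques, so I would separate them. For the implication that a ramified cover $f\colon X\to\rsphere$ unramified outside $\zoi$ forces $X$ to be defined over $\algbr$, I would argue by rigidity. Over $\rsphere\setminus\zoi$ the map $f$ restricts to a finite topological covering, and by the Riemann existence theorem the isomorphism classes of degree-$n$ coverings of $\rsphere\setminus\zoi$ are in bijection with the finite combinatorial data of Section~\ref{subsection:perm_rep}, namely the transitive triples $\per$ with $\sigma\alpha\varphi=1$ up to conjugacy; in particular there are only finitely many of each degree. Since the three branch points $\zoi$ are defined over $\mathbb{Q}$, this finite set of covers is permuted by $\txn{Aut}(\bC/\mathbb{Q})$ and each cover is fixed by $\txn{Aut}(\bC/\algbr)$; a descent argument of Weil type then produces a model of $X$ over $\algbr$.

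The converse is \belyi's theorem proper, and here I would argue constructively. Start from any nonconstant rational function $g\colon X\to\rsphere$ defined over $\algbr$ --- a coordinate function will do --- whose critical values form a finite set $B\subset\rsphere(\algbr)$, since ramification is cut out by $\algbr$-algebraic equations. The plan is to postcompose $g$ with rational self-maps of $\rsphere$ chosen to shrink $B$ until it lies inside $\zoi$, in two stages: first push $B$ into $\rsphere(\mathbb{Q})$, then collapse a finite rational set onto $\zoi$.

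For the first stage, enlarge $B$ to its full $\absgal$-orbit so that it becomes a finite $\absgal$-stable set (this is harmless, as we only ever need the ramification locus to be \emph{contained} in our chosen set). After a $\mathbb{Q}$-rational M\"obius move placing $\infty$ outside $B$, let $d$ be the largest degree over $\mathbb{Q}$ of a point of $B$ and let $m(x)\in\mathbb{Q}[x]$ be the minimal polynomial of such a point $\lambda$. Postcomposing $g$ with $m$ collapses the entire Galois orbit of $\lambda$ to the rational point $0$, while the new ramification locus is contained in $m(B)$ together with the critical values of $m$; since $\deg m'=d-1$, the critical values of $m$ and every image $m(\beta)$ have degree at most $d$, and no new degree-$d$ Galois orbit arises. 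Thus the number of Galois orbits of maximal degree strictly decreases, and iterating drives all critical values into $\mathbb{Q}\cup\{\infty\}$. For the second stage, a $\mathbb{Q}$-M\"obius transformation sends three of the now rational branch points to $0,1,\infty$; given a further branch point, after permuting $\zoi$ we may write it as $\tfrac{m}{m+n}$ with $m,n$ positive integers and postcompose with the \belyi polynomial $\beta_{m,n}(x)=\tfrac{(m+n)^{m+n}}{m^{m}n^{n}}\,x^{m}(1-x)^{n}$, whose critical values are exactly $\zoi$ and which maps $\{0,1,\tfrac{m}{m+n},\infty\}$ into $\zoi$. Each such step lowers the number of branch points by at least one, so finitely many iterations produce a \belyi map; imposing on $X$ the Riemann surface structure pulled back along it, exactly as in Section~\ref{dessins}, completes the construction.

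I expect the main obstacle to be the bookkeeping in the first stage: the map $m$ simultaneously lowers some degrees, merges Galois orbits, and introduces its own critical values as new branch points, so one must isolate the right invariant --- here the pair (maximal degree, number of Galois orbits of that degree) ordered lexicographically --- and check that it genuinely strictly decreases at every step, which is what makes the $\absgal$-stabilisation of $B$ essential. The descent step in the first implication is also nontrivial, but it is standard once the finiteness of three-point covers furnished by Section~\ref{subsection:perm_rep} is in hand.
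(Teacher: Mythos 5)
The paper never proves this statement: it is \belyi's classical theorem, quoted as the starting point for the theory, and the authors simply refer to \cite{belyi80, belyi02}, \cite[theorem 4.7.6]{szamuely09}, \cite[chapter 3]{GGD} and \cite{goldring14} for proofs. So there is no in-paper argument to compare yours against, and I can only judge the proposal on its own terms. Your constructive half (defined over $\algbr$ implies existence of a \belyi map) is the standard \belyi algorithm and is essentially correct: the $\absgal$-stabilisation of the branch locus, the lexicographic invariant, and the rescaled polynomials $\tfrac{(m+n)^{m+n}}{m^m n^n}x^m(1-x)^n$ are exactly the usual argument, and stage two is fine as written. One bound needs repair in stage one: you assert that the critical values of $m$ have degree ``at most $d$'', which is too weak to support ``no new degree-$d$ Galois orbit arises''. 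What $\deg m' = d-1$ actually gives is that each critical value $m(c)$ lies in $\mathbb{Q}(c)$ for $c$ a root of $m'$, hence has degree at most $d-1$, strictly below $d$; the images $m(\beta)$ may indeed still have degree $d$, but they are images of orbits already counted, so the number of degree-$d$ orbits drops by at least one (the orbit of $\lambda$ collapses to $0$). With that sharpening your invariant genuinely decreases and the stage terminates.

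The genuine gap is in the other implication. The Riemann existence theorem does give finitely many isomorphism classes of degree-$n$ covers of $\rsphere$ ramified over $\zoi$, and since $\zoi$ is $\mathbb{Q}$-rational the group $\aut{\bC/\mathbb{Q}}$ permutes them, so the orbit of $(X,f)$ is finite and its stabiliser has finite index in $\aut{\bC/\mathbb{Q}}$. But your next claim --- that ``each cover is fixed by $\aut{\bC/\algbr}$'' --- does not follow from this finiteness: nothing formal forces a finite-index subgroup of $\aut{\bC/\mathbb{Q}}$ to contain $\aut{\bC/\algbr}$ (one can extract this from Lascar's theorem that $\aut{\bC/\algbr}$ is an infinite simple group, but that is far deeper than anything you invoke). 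Moreover, even granted that all conjugates are isomorphic, Weil's criterion requires a compatible system of isomorphisms satisfying the cocycle condition; ``all conjugates isomorphic'' is a field-of-moduli statement, and the passage from field of moduli to field of definition is precisely the nontrivial content of this direction --- so as written, the sentence assumes what the descent step is supposed to deliver. The standard repairs are either the criterion of \cite[chapter 3]{GGD}, that a complex projective curve with only finitely many $\aut{\bC/\mathbb{Q}}$-conjugates up to isomorphism is defined over $\algbr$ (proved there by a specialisation argument that deals with exactly these issues), or bypassing descent entirely via the invariance of the \'etale fundamental group of $\rsphere\setminus\zoi$ under extension of algebraically closed fields of characteristic zero, as in \cite[theorem 4.7.6]{szamuely09}. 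With one of these cited in place of that sentence, your first half becomes sound as a sketch; without it, the logical order is inverted.
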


Aside from \belyi's own papers \cite{belyi80, belyi02}, various other proofs can be found in, for example, \cite[theorem 4.7.6]{szamuely09} or \cite[chapter 3]{GGD} or the recent proof in \cite{goldring14}. \belyi himself concluded that the above theorem implies that $\absgal$ embeds into the outer automorphism group of the profinite completion of the fundamental group of $\rsphere\setminus\{0,1,\infty\}$. However it was Grothendieck who observed that $\absgal$ must therefore act faithfully on the set of dessins as well. This interplay between algebraic, combinatorial and topological objects is what prompted Grothendieck to develop Esquisse d'un Programme. For more detail, see \cite{schneps_lochak97vol1} or \cite{szamuely09}.

\subsection{Galois action on dessins}\label{subsection: Galois action}

Let $D=(X,f)$ be a dessin. If $X$ is of genus 0, then necessarily $X=\rsphere$ and $f\colon\rsphere\to\rsphere$ is a rational map with critical values in the set $\{0,1,\infty\}$. If $f=p/q$, where $p,q \in\mathbb C[z]$, then \belyi's theorem implies that $p,q\in\algbr[z]$. Moreover, the coefficients of both $p$ and $q$ generate a finite Galois extension $K$ of $\mathbb Q$. Therefore $p,q\in K[z]$. Then $\gal K$ acts on $f$ by acting on the coefficients of $p$ and $q$, that is if $\theta\in\gal K$ and
\begin{align*}
  f(z) &= \frac{a_0+a_1z+\cdots+a_mz^m}{b_0+b_1z+\cdots b_nz^n},\\[.3cm]
 \txn{then }f^\theta(z) &= \frac{\theta(a_0)+\theta(a_1)z+\cdots+\theta(a_m)z^m}{\theta(b_0)+\theta(b_1)z+\cdots \theta(b_n)z^n}.
\end{align*}

If $X$ is of positive genus, then as an algebraic curve it is defined by the zero-set of an irreducible polynomial $F$ in $\mathbb C[x,y]$. This time we must take into consideration the coefficients of both $F$ and $f$ which, due to \belyi's theorem again, generate a finite Galois extension $K$ of $\mathbb Q$. Similarly as in the genus 0 case, $\gal K$ acts on $D$ by acting on the coefficients of both $F$ and $f$ simultaneously.

It is not immediately clear that the action of some automorphism in $\gal K$ on a \belyi map $f$ will produce a \belyi map. However, this is indeed  the case and we refer the reader to the discussion in \cite[ch.\ 2.4.2]{LandoZvonkin}.

Since any $\mathbb Q$-automorphism of $K$ extends to an $\mathbb Q$-automorphism of $\algbr$ \cite[ch.\ 3]{bor_jan}, we truly have an action of $\absgal$ on the set of dessins.

\begin{definition}We will denote by $D^\theta=(X^\theta,f^\theta)$ the dessin that is the result of the action of $\theta\in\absgal$ on $D=(X,f)$. We will also say that $D^\theta$ is \emph{conjugate} to $D$.\end{definition}

The following example is borrowed from \cite[ex.\ 2.3.3]{LandoZvonkin}.

\begin{example}\label{exmp1}Let $D=(X,f)$ be a dessin where $X$ is the elliptic curve
\[y^2=x(x-1)(x-(3+2\sqrt 3)),\]
and $f\colon X\to \rsphere$ is the composition $g\circ \pi_x$, where $\pi_x \colon X\to\rsphere$ is the projection to the first coordinate and $g\colon\rsphere\to\rsphere$ is given by
\[g(z)=-\frac{(z-1)^3(z-9)}{64z}.\]
The corresponding bipartite map is depicted on the left in Figure \ref{genus1action}.

\begin{figure}[ht]
  \centering
    \includegraphics[scale=.65,trim=7cm 14cm 7cm 5cm,scale=.9]{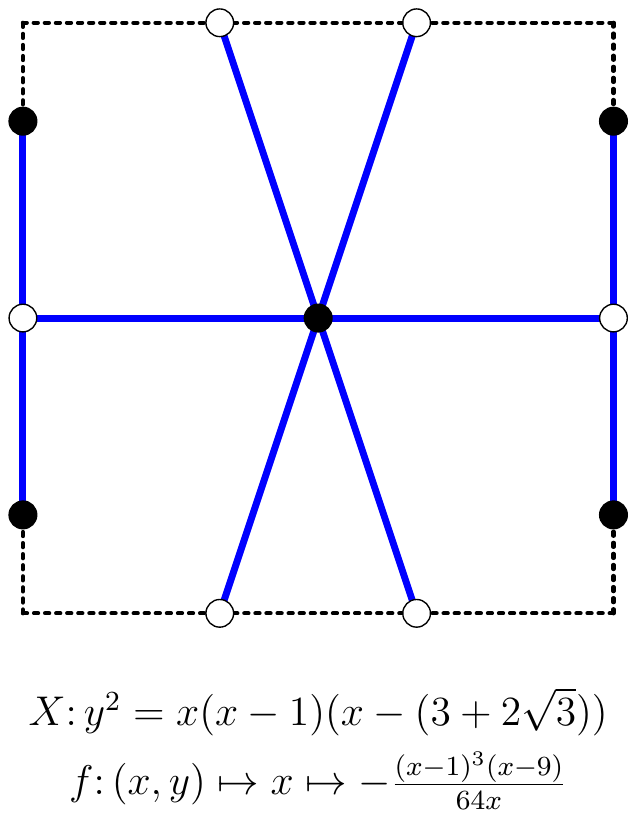}
    \includegraphics[scale=.65,trim=7cm 14cm 7cm 5cm,scale=.9]{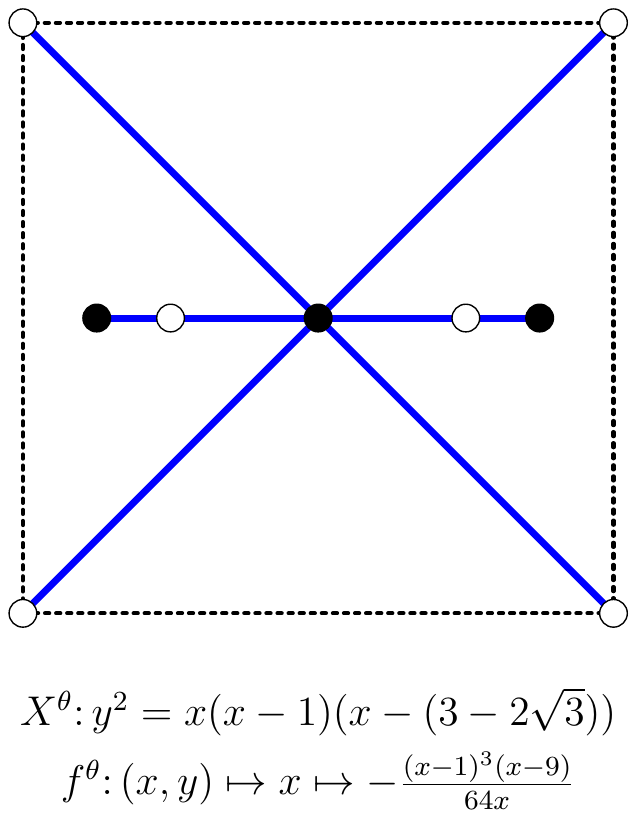}
  \caption{The two dessins $(X,f)$ and $(X^\theta,f^\theta)$ from example \ref{exmp1}. The dotted lines indicate the boundary of the polygon representation of a genus 1 surface with the usual identification of the left-and-right and top-and-bottom sides.}\label{genus1action}
\end{figure}

Note that we must consider $g\circ \pi_x$ and not just $\pi_x$ since $\pi_x$ is not a \belyi map; it is ramified over four points, namely $0$, $1$, $3+2\sqrt 3$ and $\infty$. However, $g$ maps these four points onto the set $\{0,1,\infty\}$ and therefore $g\circ\pi_x$ is a true \belyi map.

The Galois extension that the coefficients of $X$ and $f$ generate is $K=\mathbb Q(\sqrt 3)$ and the corresponding Galois group has only one non-trivial automorphism, namely $\theta\colon\sqrt3\mapsto-\sqrt3$. Therefore $X^\theta$ is the elliptic curve $y^2=x(x-1)(x-(3-2\sqrt 3))$. The curve $X^\theta$ is non-isomorphic to $X$, which can easily be seen by computing the $j$-invariants of both.

What about $f^\theta$? In this case, $\pi_x\colon X^\theta\to\rsphere$ is unramified over $3+2\sqrt 3$ and ramified over $3-2\sqrt 3$. However, $g$ maps $3-2\sqrt 3$ to 0 as well, and since $g$ is defined over $\mathbb Q$, the \belyi functions $f$ and $f^\theta$ coincide.

The bipartite map corresponding to $(X^\theta,f^\theta)$ is depicted on the right in Figure \ref{genus1action}.
\end{example}

This action of $\absgal$ on dessins is faithful already on the set of \emph{trees}, that is the genus 0 dessins with precisely one face and Shabat polynomials as \belyi functions. However, this is not straight-forward (proofs can be found in \cite{schneps94, GGD}) and, surprisingly, it is much easier to show faithfulness in genus 1 \cite[ch. 4.5.2]{GGD}. Moreover, the action is faithful in every genus \cite[ch. 4.5.2]{GGD}.

\section{From a dessin d'enfant to a path algebra}\label{dessin to path}

Recall that a quiver $Q = (Q_0, Q_1,s,t)$ is given by a finite set of vertices $Q_0$ and a finite set of oriented edges $Q_1$, called arrows, and functions $s,t : Q_1 \to Q_0$ where for $a \in Q_1$, $s(a)$ denotes the start of $a$ and $t(a)$ denotes its target.

Consider a dessin $D$ in its permutation representation $\per$.  The permutation $\sigma$ induces a quiver $\mathring Q$ in the following way. The vertices $\mathring Q_0$ of the quiver correspond to the white vertices of $D$ and the set of arrows $\mathring Q_1$ is induced by the permutation $\sigma$ in the following way: if $(i_1 \cdots i_k)$ is a non-trivial cycle in $\sigma$ and if $\alpha_{i_1}, \ldots, \alpha_{i_k}$ denote the vertices of the quiver corresponding to the white vertices in $D$ incident with $i_1, \ldots, i_k$ respectively,  then we define a cycle of arrows in the quiver by setting  $\alpha_{i_1} \to \alpha_{i_2}  \to \cdots \to \alpha_{i_k} \to \alpha_{i_1}$. Note that the trivial cycles of $\sigma$ induce loop arrows in $\mathring Q$.

\begin{example}The dessin in Figure \ref{fig:example1} is given by the permutations
\begin{align*}
\sigma &= (1~2~3~4~5)(6~7)(8~9)(10)(11),\\
\alpha &= (1)(2~10~11)(3~6~9)(4~5)(7~8)
\end{align*}
in $S_{11}$. The corresponding quiver is shown on the right of Figure \ref{fig:example1}. The differently coloured arrows correspond to the different cycles of the permutation $\sigma$.

\begin{figure}[ht]
	\centering
	\includegraphics[scale=.6]{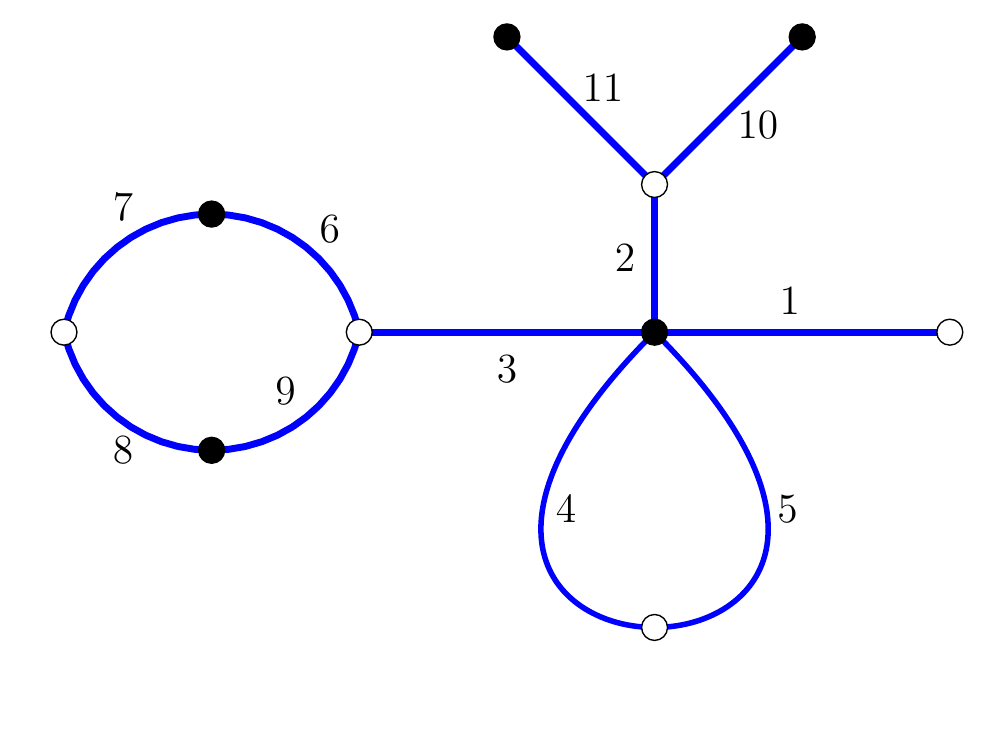}
	\includegraphics[scale=.6]{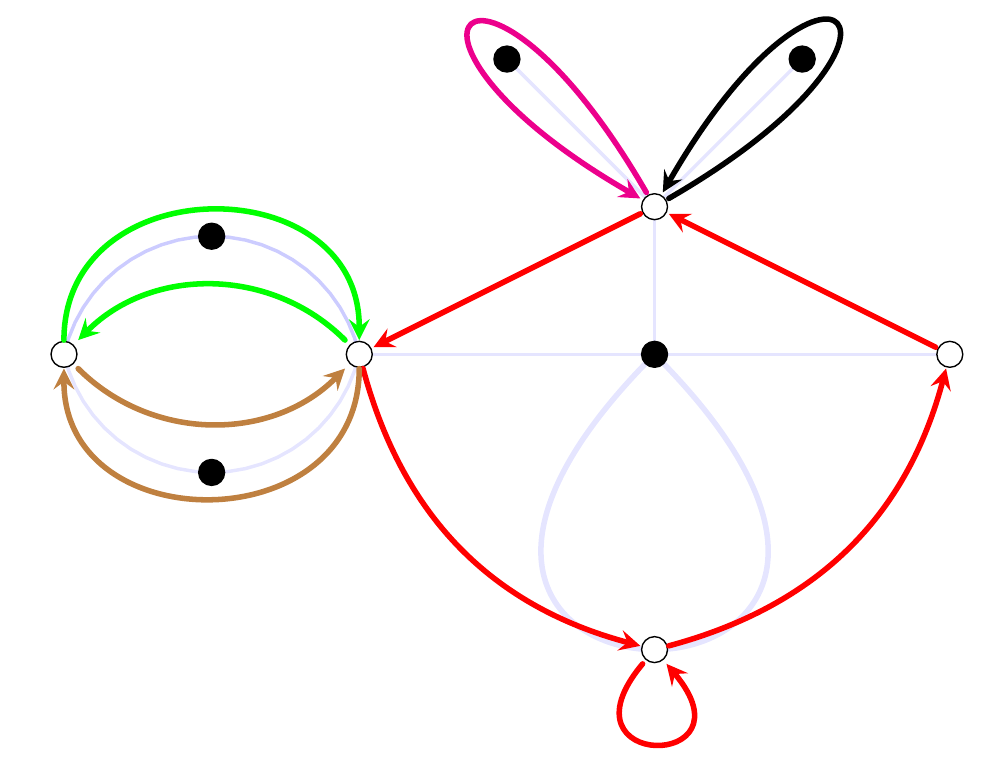}
	\caption{A dessin $D$ (left) and its quiver on the right. The differently coloured arrows in the quiver correspond to the different non-trivial cycles of the permutation $\sigma$ of $D$.}
\label{fig:example1}
\end{figure}
\end{example}

Let $K$ be a field. Recall that given a  quiver $Q$, the {\it path algebra} $KQ$, has vector space basis given by all possible finite paths in $Q$ including a trivial path $e_j$ for every vertex $j \in Q_0$. The  multiplication of two paths is given by concatenation if possible and zero otherwise.  
The multiplicative identity of $KQ$ is given by $\sum_{j \in Q_0} e_j$, the sum  of the trivial paths in $Q$.
If $Q$ has an oriented cycle then $KQ$ is an infinite dimensional algebra. For further details on paths algebras of quivers, see for example  \cite{Assem, Schiffler}.

\begin{example}The quiver $\mathring Q$ associated to the trivial dessin $D=(\rsphere,z)$ is given by a single vertex and a single loop arrow $a$. A basis for $K\mathring Q$ is given by the paths $\{e, a, a^2, \dots, a^n, \dots\}$  and $K\mathring Q$ is therefore isomorphic to the polynomial algebra $K[x]$.
\end{example}

Quivers associated to dessins always have at least one oriented cycle and therefore the associated path algebras are always infinite dimensional.

\subsection{Admissible ideals of $KQ$ and bound quiver algebras}

Let $Q$ be a quiver and $\mathcal R$ be the two-sided ideal of $KQ$ generated by all arrows in $Q$. This ideal decomposes as
\[KQ_1\oplus\cdots\oplus KQ_l\oplus\cdots,\]
where $KQ_l$ is the subspace of $KQ$ generated by all paths of length $l$. The $l$\emph{-th power} $\mathcal R^l$ \emph{of} $\mathcal R$ is the subspace with a basis of all paths of length at least $l$ with decomposition
\[\mathcal R^l = \bigoplus_{r\geq l}KQ_r.\]

\begin{definition}We say that a two-sided ideal $I$ of $KQ$ is \emph{admissible} if there exists $n\geq 2$ such that
\[\mathcal R^n \subseteq I \subseteq \mathcal R^2.\]
If $I$ is an admissible ideal of $KQ$ we say that the quotient algebra $KQ/I$ is a \emph{bound quiver algebra}.
\end{definition}

Bound quiver algebras are finite dimensional. They are indecomposable if and only if the quiver is connected.

The introduction of admissible ideals and bound quiver algebras is far from arbitrary. It is a well known result in representation theory due to Gabriel \cite{Gabriel} stating that if $K$ is algebraically closed, every connected finite dimensional $K$-algebra is Morita equivalent to a bound algebra $KQ/I$ for a unique quiver $Q$ and an admissible ideal $I$ of $KQ$.

\section{From a dessin d'enfant to a Brauer configuration algebra}\label{dessin to brauer}

In this section, given a dessin $D$ we will define a quiver $Q_D$ and an admissible ideal $I_D$ of $K Q_D$ so that the quotient algebra $KQ_D/I_D$ is finite dimensional. 
Let $D$ be a dessin with $m$ black vertices $\sigma_j$, for $j=1,\dots,m$ and $n$ white vertices $\alpha_k$,  for $k=1,\dots,n$. Let $\cL$ be the set of loop arrows in $\mathring Q_1$ induced by a black vertex of degree 1. We will refer to the elements in $\cL$ as \emph{formal loop arrows}. We recall  that the quiver $\mathring Q$ has vertices corresponding to the white vertices of $D$ and if $D= \per$ then the arrows of $\mathring Q$ are induced by the permutation $\sigma$. 

\begin{definition}The quiver $Q_D=(Q_0,Q_1)$ associated to $D$ is the quiver $Q_D=(Q_0,Q_1)$ with $Q_0=\mathring Q_0$ as its vertex set and $Q_1=\mathring Q_1\setminus \cL$ as its arrow set.\end{definition}

\begin{definition} Let $\sigma_j$ be a black vertex of degree $ \deg\sigma_j\geq2$. We call a cycle $a_{i_1}a_{i_2}\cdots a_{i_{\deg\sigma_j}}$ in $Q_D$ induced by $\sigma_j$  a \emph{special} $\sigma_j$\emph{-cycle}.  When a black vertex is unspecified, the corresponding cycle in $Q_D$ will be called a special $\sigma$-cycle. Black vertices of degree 1 by construction of $Q_D$ do not contribute any cycles.

\begin{figure}[ht]
\centering
\includegraphics[scale=.5]{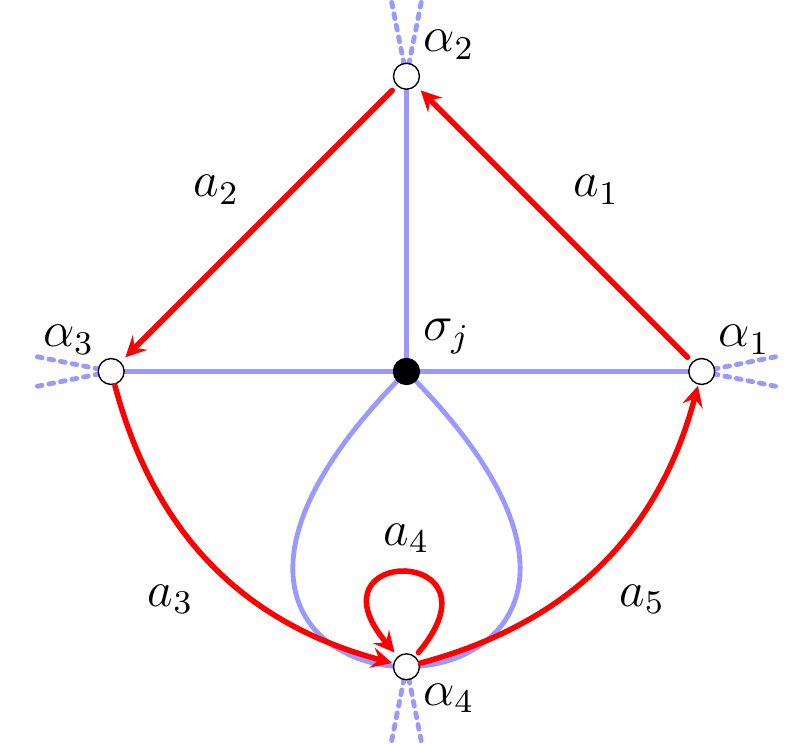}
\caption{A vertex $\sigma_j$ of degree 5 in a dessin with the corresponding cycle in $Q_D$. Note that  $a_4$ is a loop since $\alpha_4$ shares two (consecutive) edges with $\sigma_j$.}%
\label{fig:example2}%
\end{figure}

Furthermore, a special $\sigma$-cycle starting at the white vertex $\alpha_k$ is called \emph{a special} $\sigma$\emph{-cycle at} $\alpha_k$.
\end{definition}

\begin{example} Suppose that $\sigma_j$ is a black vertex of degree $5$ such that the white vertices incident to it are labelled by $\alpha_1, \dots, \alpha_{4}$ in the counter-clockwise order, as in Figure \ref{fig:example2}. Furthermore, let $a_1, \ldots, a_5$ be the arrows as in Figure~\ref{fig:example2}. Then the special $\sigma_j$-cycles in Figure \ref{fig:example2} are given by $a_1a_2a_3a_4a_5$, $a_2a_3a_4a_5a_1$, $a_3a_4a_5a_1a_2$, $a_4a_5a_1a_2a_3$ and $a_5a_1a_2a_3a_4$. For the vertices  $\alpha_1$, $\alpha_3$ and $\alpha_4$ there is exactly one special $\sigma_j$-cycle at $\alpha_1$, $\alpha_2$ and $\alpha_3$, respectively given by
\begin{align*}
\mbox{at } \alpha_1: a_1a_2a_3a_4a_5,\\
\mbox{at } \alpha_2: a_2a_3a_4a_5a_1,\\
\mbox{at } \alpha_3: a_3a_4a_5a_1a_2.
\end{align*}
However, there are two special $\sigma_j$-cycles at $\alpha_4$ given by $a_4a_5a_1a_2a_3$ and $a_5a_1a_2a_3a_4$.
\end{example}

The special $\sigma$-cycles at $\alpha_s$ will contribute relations to \emph{the generating set of relations} $\rho_D$ of \emph{the admissible ideal} $I_D$ of the path algebra $KQ_D$. The set $\rho_D$ consists of three types of relations:

\subsubsection*{Relations of type one.} For each white vertex $\alpha_s$ and each pair $\sigma_j$ and $\sigma_k$ of black vertices of degree at least 2 incident to $\alpha_s$, all relations of the form
\[C_j - C_k,\]
where $C_j$ and $C_k$ are the special $\sigma_j$ and $\sigma_k$-cycles at $\alpha_s$ are in $\rho_D$.

\subsubsection*{Relations of type two.} For all $\sigma_r$ all relations of the type $Ca$ are in $\rho_D$, where $C$ ranges across all special $\sigma_r$-cycles and $a$ is the first arrow of $C$.

\subsubsection*{Relations of type three.} All paths $ab$ of length 2 which are not subpaths of any special cycle are relations in $\rho_D$.

\begin{example}\label{example3} Let $D$ be as the dessin from Figure \ref{fig:example1} with an additional half-edge attached to white vertex $\alpha_1$. Label the arrows as in Figure \ref{fig:example3}.

\begin{figure}[ht]
\centering
\includegraphics[scale=.6]{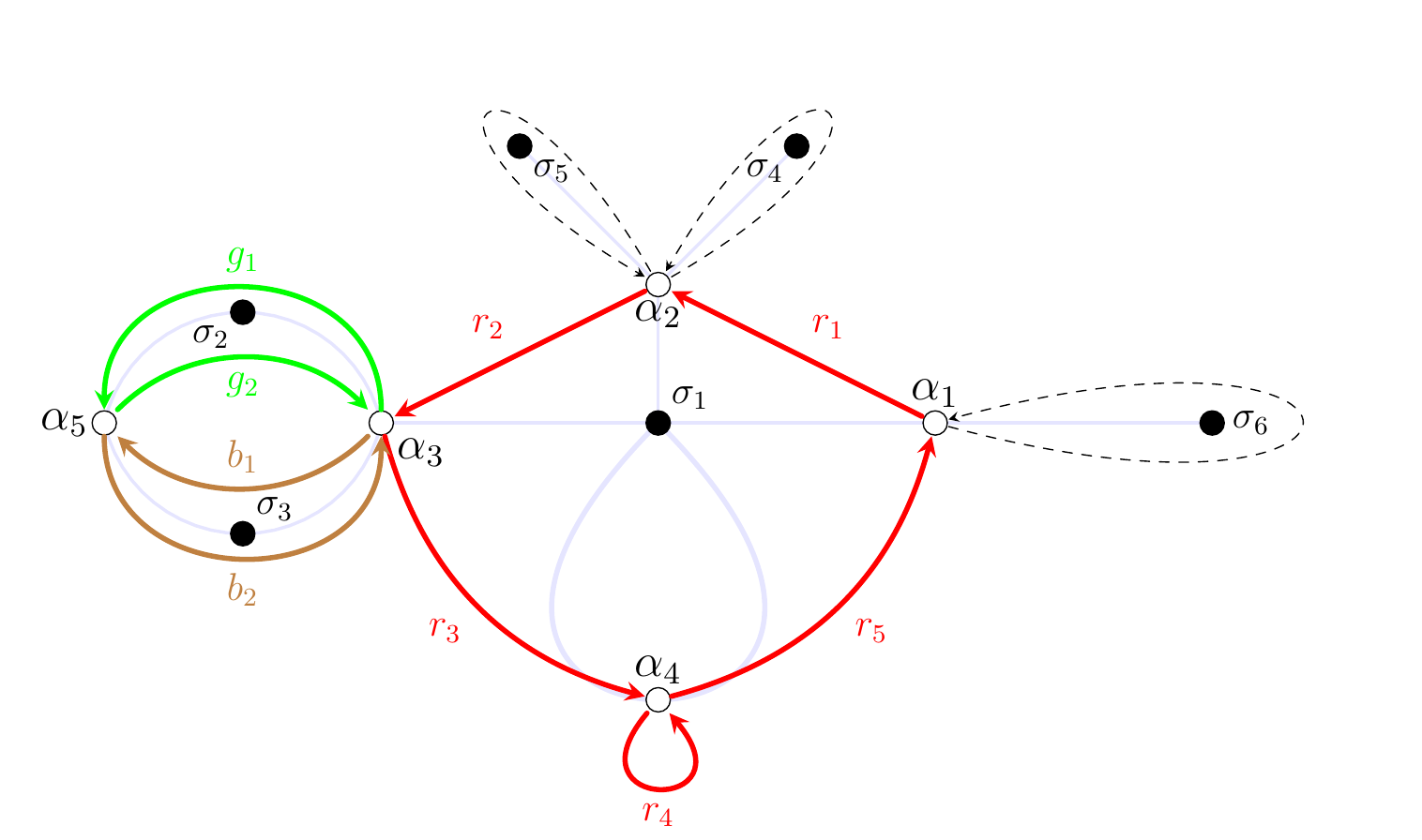}
\caption{The quiver $Q_D$ from Example \ref{example3}. Arrows are labelled according to colour ($r$ for red, $g$ for green, $b$ for brown). The dashed loop arrows are formal.}
\label{fig:example3}%
\end{figure}

	Relations of type one are given by:
	\begin{align*}
		\alpha_1\colon & \textnormal{none.}\\
		\alpha_2\colon & \textnormal{none.}\\
		\alpha_3\colon & r_3r_4r_5r_1r_2 - g_1g_2, r_3r_4r_5r_1r_2 - b_1b_2, g_1g	_	2-	b_1b_2.\\
	\alpha_4\colon & r_4r_5r_1r_2r_3-r_5r_1r_2r_3r_4.\\
	\alpha_5\colon & b_2b_1 - g_2g_1.
	\end{align*}

	Relations of type two are given by:
	\begin{align*}
			\sigma_1\colon & \overbrace{r_1r_2r_3r_4r_5r_1}^{\sigma_1\txn{-cycle at }\alpha_1}, \overbrace{r_2r_3r_4r_5r_1r_2}^{\sigma_1\txn{-cycle at }\alpha_2}, \overbrace{r_4r_5r_1r_2r_3r_4}^{\sigma_1\txn{-cycle at }\alpha_3}, \overbrace{r_4r_5r_1r_2r_3r_4, r_5r_1r_2r_3r_4r_5}^{\sigma_1\txn{-cycles at }\alpha_4}\\
	\sigma_2\colon & \underbrace{g_1g_2g_1}_{\sigma_2\txn{-cycle at }\alpha_3}, \underbrace{g_2g_1g_2.}_{\sigma_2\txn{-cycle at }\alpha_5}\\
	\sigma_3\colon & \underbrace{b_1b_2b_1}_{\sigma_3\txn{-cycle at }\alpha_3}, \underbrace{b_2b_1b_2.}_{\sigma_3\txn{-cycle at }\alpha_5}\\
	\sigma_4, \sigma_5, \sigma_6\colon & \textnormal{none}.
	\end{align*}

	Relations of type three are given by:
	\begin{align*}
	r_2g_1, r_2b_1, r_3 r_5, r_4^2, g_1b_2, g_2b_1, g_2r_3, b_1g_2, b_2g_1, b_2r_3.
	\end{align*}
\end{example}

The bound quiver algebra $KQ_D/I_D$ is called a \emph{Brauer configuration algebra}.  More generally, a \emph{Brauer configuration} as defined in \sloppy \cite{GreenSchrollBrauerConfig} is a tuple $(\Gamma_0,\Gamma_1,\mu,\mathfrak o)$ where $\Gamma_0$ is the set of vertices, $\Gamma_1$ the set \emph{polygons}, i.e.\ multisets of vertices, $\mu\colon\Gamma_0\to\mathbb N$ a function into the positive integers called the \emph{multiplicity}, and $\mathfrak o$ is the function specifying at every vertex a cyclic ordering of the  polygons incident with that vertex. A Brauer configuration algebra is constructed from a Brauer configuration via a quiver and an ideal generated by relations of type one, two and three as defined above, with additional constraints on the relations given by the multiplicity function $\mu$. 

In the language of dessins d'enfants, $\Gamma_0$ is the set of black vertices, $\Gamma_1$ is the set of white vertices, $\mu\colon\Gamma_0\to\mathbb N$ is the function $\mu(\sigma_j)=1$, and $\mathfrak o$ is the counter-clockwise orientation induced by the orientation of the underlying Riemann surface of $D$. When a dessin is \emph{clean}, i.e.\ when all white vertices have degree 2, then the corresponding Brauer configuration algebra is a Brauer graph algebra; the relation between clean dessins and Brauer graph algebras has been studied in \cite{MS2}.

The relations $\rho_D$ are not necessarily a minimal set of relations for $I_D$. The relations of type one and three are always minimal, however relations of type two are often redundant; this is a  generalisation of a similar result for Brauer graph algebras \cite{GSS}. 

\begin{remark}\label{remark:truncated}The reason why the loop arrows induced by black vertices of degree 1 are not included in $Q_D$ is because the ideal $I_D$ would no longer be admissible if the corresponding relations were to be added to $\rho_D$. However, the quotient algebra $K\mathring Q_D/I$, where $I$ is the ideal generated by the relations of type 1, 2 and 3,  including the relations induced by formal loop arrows where each loop is its own special cycle, is isomorphic to the Brauer configuration algebra $KQ_D/I_D$. 

An example of this are the two different  Brauer configuration algebras associated to the dessin in Figure~\ref{fig:example1} to which we can associate a quiver $Q_1$ with formal loop arrows as in Figure~\ref{fig:example1} and a quiver $Q_2$ without the formal loop arrows as in Figure~\ref{fig:example2}.  The algebras  $K\mathring Q_{D_1}/I_1$ and $K\mathring Q_{D_2}/I_2$, where $I_1$ and $I_2$ are the ideals generated by the relations of type 1, 2 and 3 where in the case of $I_1$ every formal loop arrow is its own special cycle. 

Note that black vertices of degree 1 correspond to \emph{truncated vertices} in \sloppy \cite{GreenSchrollBrauerConfig}.
 \end{remark}

\section{Examples of (well-known) algebras and their dessins}

\subsection{Symmetric Nakayama algebras}

Let  $D_n=(\rsphere,f_n)$ be the family of dessins given by the \belyi maps $f_n\colon z\mapsto z^n$ for $n\geq2$.
	The quiver $Q_{D_n}$ is represented by an oriented regular $n$-gon as in Figure \ref{fig:nakayama}, with relations $a_1\cdots a_n a_1$, $\dots$, $a_na_1\cdots a_n$. In this case the only relations in $I_{D_n}$ are relations of type two and they are necessarily minimal.
	\begin{figure}[ht]
	\centering
	\includegraphics[scale=.6]{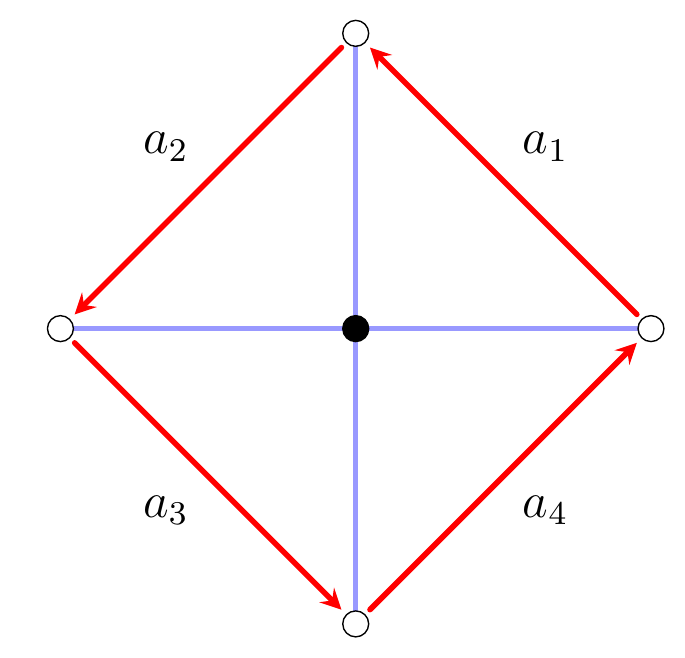}
	\caption{The quiver $Q_{D_4}$ corresponding to the dessin $D_4=(\rsphere,z^4)$.}%
	\label{fig:nakayama}%
	\end{figure}
	
The corresponding Brauer configuration algebras $KQ_{D_n}/{I_{D_n}}$ are symmetric Nakayama algebras.

\subsection{Koszul Brauer configuration algebras}\label{section:koszul}

Let  $D_n=(\rsphere,f_n)$ be the family of dessins given by the \belyi maps $f_n\colon z\mapsto (z^n+1)^2/4z^n$ for $n\geq3$. The zeros of $f_n$ and $f_n-1$ are at the $n$ roots of $-1$ and $1$, respectively, each appearing with multiplicity $2$, and the two poles are at $0$ and $\infty$, both of degree $n$. These dessins are regular polygons of degree $n$ on the sphere.

\begin{figure}[ht]
	\centering
	\includegraphics[scale=0.7]{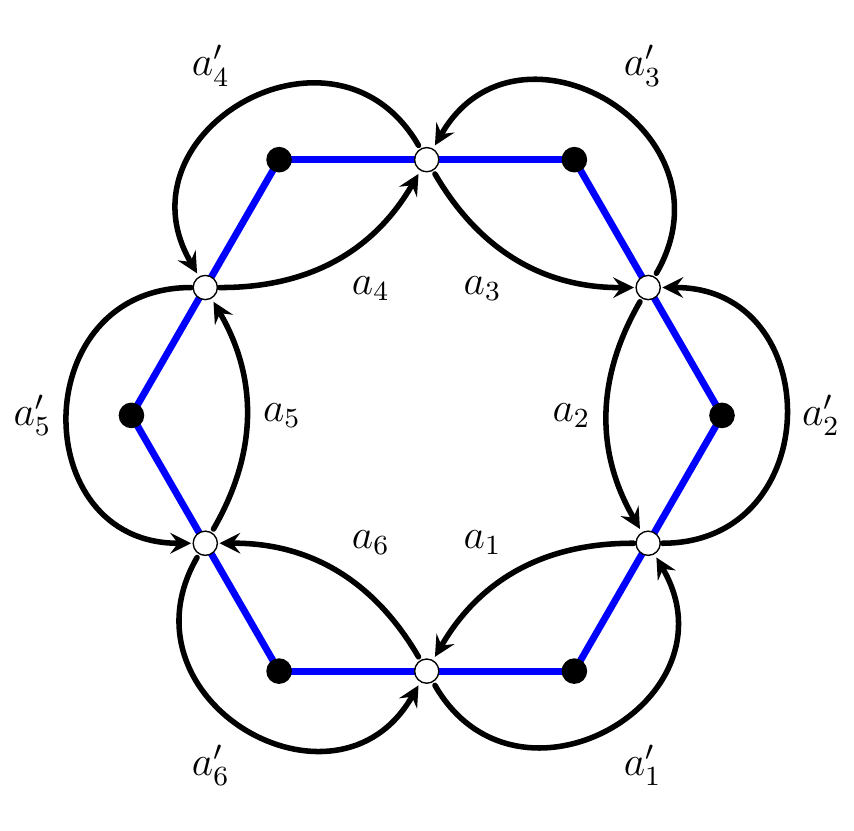}
	\caption{The dessin $D_6$ and its associated quiver. The special cycles are given by the paths $a_ia'_i$ and $a'_ia_i$ for $i=1, \dots, 6$.}%
	\label{fig:koszul}%
	\end{figure}
	
Note in Figure \ref{fig:koszul} that the elements generating $I_{D_6}$ are all of length 2. Evidently this observation extends to $I_{D_n}$. Ideals generated by elements of length 2 are called \emph{quadratic} and the corresponding Brauer configuration algebra is a Brauer graph algebra and by \cite{GSS} it is Koszul.

\section{Brauer configuration algebra of the dual dessin}

\subsection{Paths formed by the faces of a dessin} In this subsection we analyse the paths formed by the faces of a dessin.

\begin{proposition}\label{proposition:faces}The faces of a dessin $D=\per$ decompose $Q_D$ into disjoint cycles oriented clockwise. Moreover, if we count the formal loop arrows, then the lengths of these cycles correspond to the degrees of the faces.\end{proposition}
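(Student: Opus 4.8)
The plan is to realise each face cycle explicitly as a sequence of arrows of $\mathring Q$ indexed by a single $\varphi$-orbit, and then read off disjointness, length and orientation from that description. Recall that the arrows of $\mathring Q$ are indexed by the half-edges $\{1,\dots,n\}$: with the labelling coming from the special $\sigma$-cycles, the arrow $a_i$ runs from the white vertex incident with $i$ to the white vertex incident with $\sigma(i)$. Faces are the cycles of $\varphi$, and from $\sigma\alpha\varphi=1$ one has $\varphi^{-1}=\sigma\alpha$, so that $\varphi^{-1}(i)=\alpha(\sigma(i))$ lies in the same cycle of $\alpha$ as $\sigma(i)$; equivalently, $\varphi^{-1}(i)$ is incident with the same white vertex as $\sigma(i)$. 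This single identity is the engine of the whole argument.

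First I would establish composability. The target of $a_i$ is the white vertex incident with $\sigma(i)$, while the source of $a_{\varphi^{-1}(i)}$ is the white vertex incident with $\varphi^{-1}(i)=\alpha(\sigma(i))$; these two white vertices coincide by the identity above, so $a_i a_{\varphi^{-1}(i)}$ is a genuine path in $\mathring Q$. Iterating, for a face corresponding to a $\varphi$-cycle of length $d$ through $i$, the half-edges $i,\varphi^{-1}(i),\dots,\varphi^{-(d-1)}(i)$ yield arrows $a_i,a_{\varphi^{-1}(i)},\dots$ in which each consecutive pair composes; since $\varphi^{-d}(i)=i$, the same composability closes the walk into an oriented cycle of $\mathring Q$ consisting of exactly $d$ arrows.

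Next I would check that these cycles decompose the arrow set. Each arrow $a_i$ occurs in exactly one such cycle, the one attached to the $\varphi$-orbit of $i$; since the $\varphi$-orbits partition $\{1,\dots,n\}$, the face cycles are pairwise arrow-disjoint and together use every arrow of $\mathring Q$ precisely once. This yields the decomposition of the first sentence after passing from $\mathring Q$ to $Q_D$ by deleting the formal loop arrows, and it yields the length statement directly: in $\mathring Q$, that is counting the formal loops, the cycle attached to a face has length equal to the length $d$ of the corresponding $\varphi$-cycle, which is by definition the degree of that face.

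Finally, the orientation. By definition $\varphi$ records the counter-clockwise ordering of the labels within each face, whereas composability forces the arrows to be traversed in the order $i,\varphi^{-1}(i),\varphi^{-2}(i),\dots$, i.e.\ along $\varphi^{-1}$; this reverses the counter-clockwise order and is therefore clockwise, opposite to the counter-clockwise sense of the special $\sigma$-cycles. I expect the main obstacle to be exactly this orientation bookkeeping, together with pinning down the composition convention so that $\varphi^{-1}=\sigma\alpha$ (consistent with the worked example, where $\sigma=(1)(2\,3\,4)(5)$ and $\alpha=(1\,2)(3\,5\,4)$ produce $\varphi=(1\,4\,5\,2)(3)$); the composability and counting steps are then routine. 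A minor additional care-point is the degenerate case where a face cycle consists entirely of formal loops, which is precisely why the length must be measured in $\mathring Q$ rather than in $Q_D$.
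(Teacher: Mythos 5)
Your proof is correct and takes essentially the same route as the paper's: both index the arrows of $\mathring Q$ by half-edges, use $\sigma\alpha\varphi=1$ to identify the target of the arrow induced by $i$ (the white vertex of $\sigma(i)$) with the white vertex of $\varphi^{-1}(i)$, and then read off disjointness and the length count from the partition of half-edges into $\varphi$-orbits, with the $\varphi^{-1}$-traversal giving the clockwise orientation. Your additional attention to the composition convention and to faces consisting entirely of formal loops is consistent with, and slightly more explicit than, the paper's argument.
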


\begin{proof}Consider a face of $D$ corresponding to the cycle $\varphi_j=(i_1~\cdots ~i_k)$ of $\varphi$. Let $\alpha_{h}$ denote the white vertex of $D$ with $h$ as its half-edge. Each half-edge $i_m$ for $m=1, \dots, k$ induces a single arrow (possibly a formal loop arrow) whose source and target are the white vertices $\alpha_{i_m}$ and $\alpha_{i_m^\sigma}$, respectively. Note that if $\alpha_{i_m}\neq\alpha_{i_m^\sigma}$, then $\alpha_{i_m^\sigma}$ follows $\alpha_{i_m}$ in the counter-clockwise order around $\varphi_j$ because
$i_m^\sigma=i_m^{\inv\varphi\inv\alpha}$ and $\alpha_{i_m^{\inv\varphi\inv\alpha}}=\alpha_{i_m^{\inv\varphi}}$. Therefore, $\varphi_j$ induces a cycle of arrows oriented clockwise. As $i_m$ induces exactly one arrow, and each $i_m$ belongs to exactly one cycle of $\varphi$, the cycles of arrows obtained from the faces of $D$ are disjoint with lengths equal to the degrees of the faces.\end{proof}

When a dessin $D$ has a polygonal face, that is a face of degree $d\geq 2$ such that each black and each white vertex contributes exactly 2 half-edges, then the corresponding paths are formed by concatenations of relations of type 3.

\begin{proposition}\label{proposition:polygonalpath}The polygonal faces of $D$ of degree $d\geq 2$ give rise to cyclic permutations of paths of the type $a_1a_2a_3\cdots a_{d-1}a_d$ such that there are no repeating arrows, and $a_ia_{i+1}$ and $a_da_1$ are relations of type 3, for all $i=1,\dots,{d-1}$.
\end{proposition}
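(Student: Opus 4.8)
The plan is to read off the cycle produced by a polygonal face from Proposition \ref{proposition:faces} and then check, arrow by arrow, that none of its length-2 subpaths lies on a special cycle, so that each is forced into $\rho_D$ as a relation of type three.

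First I would fix a polygonal face corresponding to the cycle $\varphi_j = (i_1 \cdots i_d)$ of $\varphi$, with $d \geq 2$. Since the face is polygonal, every black vertex on its boundary has degree at least $2$; hence none of the half-edges $i_1, \dots, i_d$ is fixed by $\sigma$, so none of the induced arrows is a formal loop and all of them survive in $Q_D$. By Proposition \ref{proposition:faces} these $d$ arrows assemble into a single clockwise cycle, which after relabelling I write as $a_1 a_2 \cdots a_d$. That there are no repeating arrows is immediate: each arrow is indexed by the half-edge of $\varphi_j$ that induces it, and the $d$ half-edges $i_1, \dots, i_d$ are pairwise distinct.

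The heart of the argument is the description of two consecutive arrows. Recalling from the proof of Proposition \ref{proposition:faces} that the arrow induced by a half-edge $i$ terminates at the white vertex $\alpha_{i^{\inv\varphi}}$, the arrow following it in the face cycle is the one induced by $i^{\inv\varphi}$, so a generic consecutive pair has the form $a_i\, a_{i^{\inv\varphi}}$. Now a length-2 path $a_x a_y$ is a subpath of a special $\sigma$-cycle precisely when $y = x^\sigma$ and $x$ lies in a $\sigma$-cycle of length at least $2$, since the arrows of a special cycle are consecutive with respect to $\sigma$. Using $\sigma = \inv\varphi\inv\alpha$ (equivalently $\sigma\alpha\varphi = 1$), I compute $i^\sigma = (i^{\inv\varphi})^{\inv\alpha}$, so that $i^{\inv\varphi} = i^\sigma$ holds if and only if $i^{\inv\varphi}$ is fixed by $\alpha$, i.e.\ if and only if the white vertex $\alpha_{i^{\inv\varphi}}$ has degree $1$.

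It remains to exclude this degenerate case using the polygonal hypothesis. The white vertex $\alpha_{i^{\inv\varphi}}$ lies on the boundary of the face, because $i^{\inv\varphi}$ is again a half-edge of $\varphi_j$; as the face is polygonal this vertex contributes exactly two half-edges and in particular has degree at least $2$, so it is not fixed by $\alpha$. Therefore $i^{\inv\varphi} \neq i^\sigma$ for every half-edge $i$ of $\varphi_j$, and no consecutive pair $a_i\, a_{i^{\inv\varphi}}$ is a subpath of a special cycle. By the definition of relations of type three, every such pair --- including the wrap-around $a_d a_1$ --- belongs to $\rho_D$, which is the assertion. I expect the only delicate point to be the bookkeeping of the permutation conventions (the right action and $\sigma = \inv\varphi\inv\alpha$) in identifying consecutive arrows and in the equivalence ``$i^{\inv\varphi} = i^\sigma \iff \alpha_{i^{\inv\varphi}}$ has degree $1$''; once these are pinned down, the polygonal hypothesis does exactly the work of ruling out degree-$1$ white vertices on the face.
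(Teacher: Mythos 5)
Your proof is correct, but it takes a genuinely different route from the paper's. The paper argues directly from the geometry of the polygon: it labels the corners counter-clockwise as $\alpha_1,\sigma_1,\alpha_2,\sigma_2,\dots,\alpha_d,\sigma_d$, lets $a_i$ be the arrow from $\alpha_i$ to $\alpha_{i+1}$ lying on the special $\sigma_i$-cycle (Figure~\ref{fig:facecorrespondence}), and concludes that $a_ia_{i+1}$ and $a_da_1$ are relations of type three because $a_i$ and $a_{i+1}$ are induced by \emph{distinct} black vertices $\sigma_i\neq\sigma_{i+1}$, and a length-2 path whose two arrows come from different black vertices cannot be a subpath of a single special cycle; the polygonal hypothesis is thus used through the pairwise distinctness of the black corners. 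You instead compute in the permutation representation: consecutive arrows of the face cycle are $a_ia_{i^{\inv\varphi}}$, and via $\sigma=\inv\varphi\inv\alpha$ such a pair is a subpath of a special cycle if and only if $i^{\inv\varphi}$ is fixed by $\alpha$, i.e.\ if and only if its white vertex has degree 1; the polygonal hypothesis enters only through the weaker consequence that black and white vertices on the boundary have degree at least 2. The mechanisms are complementary (black-corner distinctness versus white-vertex degrees), and yours in fact proves a strictly stronger statement: the conclusion holds for every face of degree $d\geq 2$ all of whose incident black and white vertices have degree at least 2, even when the face revisits a black vertex. In that situation consecutive face arrows may lie on the \emph{same} special cycle without being consecutive in it (for instance the degree-3 face of the torus dessin with $\sigma=\alpha=(1\,2\,3)$, whose face cycle is $a_1a_3a_2$ while the special cycle is $a_1a_2a_3$), so the paper's argument is unavailable there while your criterion still applies. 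What the paper's proof buys is brevity and a picture-level explanation matching its figure; what yours buys is generality and an exact characterization of when a consecutive pair fails to be of type three.
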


\begin{proof}A polygonal face of degree $d\geq 2$ is incident to $d$ black vertices $\sigma_1$, $\dots$, $\sigma_d$, and $d$ white vertices $\alpha_1$, $\dots$, $\alpha_d$ ordered counter-clockwise, so that $\sigma_1$ follows $\alpha_1$ in this order. Let $a_i$ be the arrow with $\alpha_i$ as source and $\alpha_{i+1}$ as target belonging to the special $\sigma_{i}$-cycle at $\alpha_i$, for $i=1,\dots,d-1$, and let $a_d$ be the arrow with $\alpha_d$ as source and $\alpha_1$ as targed belonging to the special $\sigma_d$-cycle at $\alpha_d$, see Figure \ref{fig:facecorrespondence}. Then $a_ia_{i+1}$ and $a_da_1$ are not subpaths of any special cycle and hence a relation of type 3, for all $i=1,\dots,d-1$.
\end{proof}

\begin{figure}[ht]
\centering
\includegraphics[scale=.9]{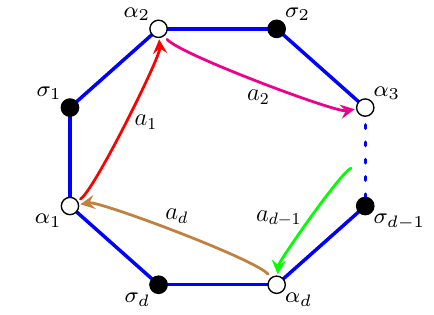}
\caption{In a polygonal face of degree $d\geq 2$ pairs of consecutive arrows belong to distinct $\sigma$-cycles.}%
\label{fig:facecorrespondence}%
\end{figure}

Note that not every cyclic path in which consecutive arrows are type 3 relations corresponds to a face, see Figure \ref{fig:nontype3} for an example.

\begin{figure}[ht]
\centering
\includegraphics[scale=.9]{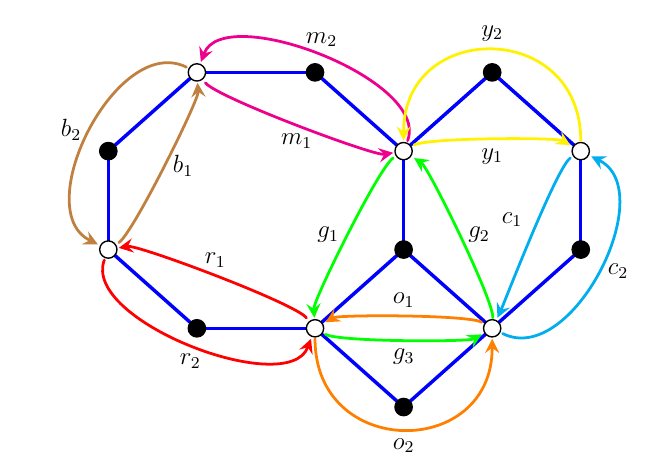}
\caption{The path $r_1b_1m_1y_1c_1o_1$ has no repeated arrows, and all of its subpaths of length 2 as well as $o_1r_1$ are relations of type 3, yet it corresponds to no face of the underlying dessin.}%
\label{fig:nontype3}%
\end{figure}

In general, a face of $D$ with vertices $\sigma_1, \dots, \sigma_k$ of degree at least 2 gives rise to paths of type $A_1A_2\cdots A_k$, where $A_j$ ($j=1,\dots, k$) is a path of arrows in the corresponding $\sigma_j$-cycle, and $A_j$ and $A_{j+1}$ ($j=1,\dots,k-1$) are connected by a relation of type 3, as well as $A_k$ and $A_1$. 

\subsection{Dual dessins} Consider a dessin $D=(X,f)$. Its \emph{dual} dessin $D^*$ is defined as the dessin corresponding to the \belyi pair $(X,1/f)$.

In terms of permutation representations, if $D=\per$, then $D^*$ will have the triple $(\inv\varphi,\inv\alpha,\inv\sigma)$ as its permutation representation. Geometrically this means that the black vertices and the face centres of the dual are the face centres and the black vertices of $D$, respectively, while the white vertices remain unchanged, except for the orientation of the labels. The half-edges of $D^*$ are the curved segments that connect the face centres and the white vertices of $D$, see Figure \ref{dual} for an example.

\begin{figure}[ht]
  \centering
  \includegraphics[scale=.5]{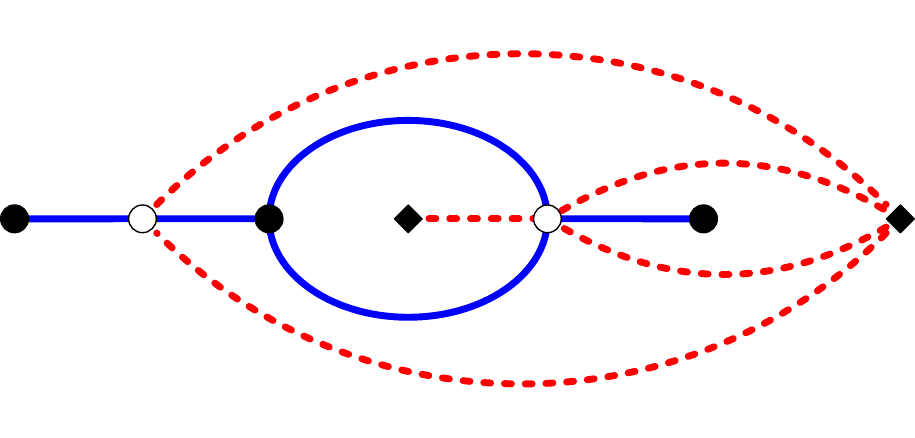}
  \caption{A dessin (full) and its dual (dashed). The black vertices of the dual are indicated by $\blacklozenge$.}\label{dual}
\end{figure}

\begin{theorem} Let $D$ be a dessin and let $D^*$ be its dual dessin. Then the quivers $\mathring Q_D$ and $\mathring Q_{D^*}^{op}$ are equal. Furthermore, if $D$ has no vertices and no faces of degree 1 then the quivers $Q_D$ and $Q_{D^*}^{op}$ are equal.
\end{theorem}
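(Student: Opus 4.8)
The plan is to argue entirely in the permutation model, comparing the two quivers first on their vertex sets and then arrow by arrow, using only the relation $\sigma\alpha\varphi=1$ and the identification $D^{*}=(\varphi^{-1},\alpha^{-1},\sigma^{-1})$. For the vertices, $\mathring Q_{D}$ has the white vertices of $D$, i.e. the cycles of $\alpha$, as its vertex set, while $\mathring Q_{D^{*}}$ has the cycles of the white permutation $\alpha^{-1}$ of $D^{*}$; since $\alpha$ and $\alpha^{-1}$ have the same cycles as subsets of $\{1,\dots,n\}$, the identity on labels identifies the two vertex sets, and I write $\alpha_{i}$ for the white vertex carrying the half-edge $i$ in either dessin.

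For the arrows, recall that each half-edge $i$ contributes one arrow $\alpha_{i}\to\alpha_{i^{\sigma}}$ to $\mathring Q_{D}$, and one arrow $\alpha_{i}\to\alpha_{i^{\varphi^{-1}}}$ to $\mathring Q_{D^{*}}$, whose black permutation is $\varphi^{-1}$. The identity that makes everything match is the one already exploited in the proof of Proposition \ref{proposition:faces}: from $\sigma\alpha\varphi=1$ we get $\sigma=\varphi^{-1}\alpha^{-1}$, so $i^{\sigma}=(i^{\varphi^{-1}})^{\alpha^{-1}}$ lies in the same $\alpha$-cycle as $i^{\varphi^{-1}}$, whence $\alpha_{i^{\sigma}}=\alpha_{i^{\varphi^{-1}}}$. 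Thus for every $i$ the arrow of $\mathring Q_{D}$ and the arrow of $\mathring Q_{D^{*}}$ indexed by $i$ join the same pair of white vertices, so the half-edge correspondence $i\mapsto i$ is a bijection of arrows that is the identity on vertices. It then remains only to pin down the direction of each dual arrow, which I address at the end.

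For the second assertion, recall that $Q_{D}$ is $\mathring Q_{D}$ with its formal loops removed, where the formal loops are exactly the loops induced by black vertices of $D$ of degree $1$, that is the fixed points of $\sigma$; symmetrically $Q_{D^{*}}$ is $\mathring Q_{D^{*}}$ with the loops induced by the degree-$1$ black vertices of $D^{*}$ removed. Since the black vertices of $D^{*}$ are the faces of $D$, these latter loops come precisely from the faces of $D$ of degree $1$, i.e. the fixed points of $\varphi$ (the white vertices, which never produce formal loops, play no role). The hypothesis that $D$ has no vertices and no faces of degree $1$ makes both $\sigma$ and $\varphi$ fixed-point free, so neither $\mathring Q_{D}$ nor $\mathring Q_{D^{*}}$ contains a formal loop; hence $Q_{D}=\mathring Q_{D}$ and $Q_{D^{*}}=\mathring Q_{D^{*}}$, and the second assertion drops out of the first.

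The step I expect to be the real obstacle is the orientation bookkeeping, namely showing that the arrow bijection above identifies $\mathring Q_{D}$ with $\mathring Q_{D^{*}}^{op}$ rather than with $\mathring Q_{D^{*}}$ itself. Both quivers sit on the same vertex set with the same underlying set of arrows, so the entire content is the global direction of the arrows, and this is exactly where passing to the dual reverses the cyclic orientation of the labels around the white vertices (the replacement $\alpha\mapsto\alpha^{-1}$). The clean way I would try to settle the sign is through Proposition \ref{proposition:faces}: a face of $D$ is traversed clockwise as a face-cycle of $\mathring Q_{D}$, while that same face, now a black vertex of $D^{*}$, carries a special $\sigma_{D^{*}}$-cycle of $\mathring Q_{D^{*}}$; comparing these two traversals of the same arrows, together with the $\alpha\mapsto\alpha^{-1}$ reversal, is what I expect to account for the global reversal and hence for the appearance of the opposite quiver.
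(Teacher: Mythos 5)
Your reduction of the second assertion to the first is fine (formal loops come only from degree-one black vertices of $D$ and of $D^*$, i.e.\ from fixed points of $\sigma$ and of $\varphi$, so the hypothesis kills them all), and it agrees with the paper. The gap is in the first assertion, and it is fatal to the approach as you have set it up: your main computation proves the wrong statement. Having identified the white vertices of $D$ and $D^*$ by the identity on labels, you show that for every $i$ the arrow of $\mathring Q_{D^*}$ indexed by $i$ has source $\alpha_i$ and target $\alpha_{i^{\varphi^{-1}}}=\alpha_{i^{\sigma}}$, i.e.\ the same source \emph{and the same target} as the arrow of $\mathring Q_D$ indexed by $i$. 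That determines both quivers completely: under your identification it says $\mathring Q_{D^*}=\mathring Q_D$, not $\mathring Q_{D^*}=\mathring Q_D^{op}$. There is no residual ``direction of each dual arrow'' left to pin down, so the orientation bookkeeping you defer to the end cannot produce the opposite quiver; within your framework the answer is already forced, and it is the negation of what you want.

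The error is the implicit assumption that the identity on labels agrees with the identification of white vertices under which the theorem asserts equality, namely the geometric one (the white vertices of $D$ and $D^*$ are literally the same points of $X$). The triple $(\varphi^{-1},\alpha^{-1},\sigma^{-1})$ represents $D^*$ only up to simultaneous conjugation, and the labelling of the dual's half-edges that realises this particular representative does \emph{not} attach label $i$ to the same geometric white vertex as in $D$ --- this is exactly the paper's caveat that the white vertices are unchanged ``except for the orientation of the labels''. Test case: $D=(\rsphere,z^3)$, so $\sigma=(1\,2\,3)$, $\alpha=1$, $\varphi=(1\,3\,2)$, white vertices at the cube roots of unity, and $\mathring Q_D$ is the counter-clockwise $3$-cycle. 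The dual's black vertex sits at $\infty$, and counter-clockwise around $\infty$ is clockwise in the plane, so geometrically $\mathring Q_{D^*}$ is the \emph{reversed} $3$-cycle on the same three white vertices, as the theorem claims; your identification instead returns the cycle $\alpha_1\to\alpha_2\to\alpha_3\to\alpha_1$ again, because in the labelling realising $(\varphi^{-1},\alpha^{-1},\sigma^{-1})=((1\,2\,3),1,(1\,3\,2))$ the label $2$ sits on the dual edge at $\omega^2$, not at $\omega$. To repair the argument you must either work geometrically from the start --- this is the paper's route, comparing via Proposition \ref{proposition:faces} the clockwise face-cycles of $\mathring Q_D$ with the counter-clockwise special cycles of $\mathring Q_{D^*}$ around the same points, which visibly reverses every arrow --- or redo your algebra with a representative of $D^*$ whose labels do sit at the geometrically corresponding white vertices; one such is $(\alpha\varphi\alpha^{-1},\,\alpha,\,\sigma)$, whose black permutation is $\alpha\varphi\alpha^{-1}=\sigma^{-1}\alpha^{-1}$ rather than $\varphi^{-1}$, and then your own computation gives the arrow $\alpha_i\to\alpha_{i^{\sigma^{-1}}}$, the reverse of the arrow $\alpha_{i^{\sigma^{-1}}}\to\alpha_i$ of $\mathring Q_D$, which is the theorem.
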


\begin{proof} Consider a face of the dessin $D$. By proposition \ref{proposition:faces} the cycle of arrows in $\mathring Q_D$ corresponding to this face is oriented counter-clockwise around the black vertex of $D^*$ dual to this face, see Figure \ref{fig:dual-proof}. Therefore, in order to obtain the quiver of $\mathring Q_{D^*}$ we have to reverse the direction of every arrow in $Q_D$.

If $D$ has no vertices and no faces of degree 1, then formal loops do not appear in $Q_D$ and $Q_{D^*}$ and therefore $Q_D=\mathring Q_D$ and $Q_{D^*}=\mathring Q_{D^*}$. 
\begin{figure}[ht]
\centering
\includegraphics[scale=.6]{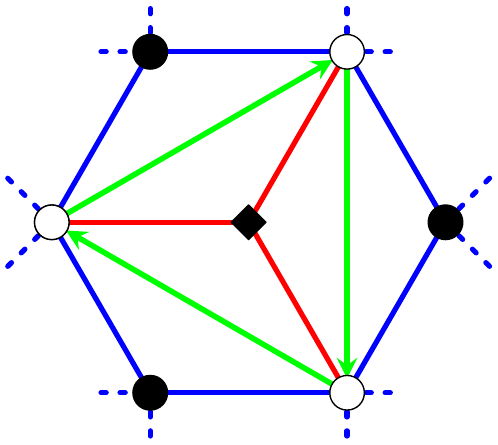}
\caption{ The arrows in a face of a dessin (in green) form a cycle oriented clockwise around the corresponding dual vertex. The dual half-edges are shown in red.}%
\label{fig:dual-proof}%
\end{figure}
\end{proof}

\begin{example} 

Let $D$ be the dessin given by the bipartite 6-gon, as in Figure \ref{fig:example6gon}. The set of arrows of its quiver is decomposed into two disjoint cycles $a_1b_1c_1$ and $a_2c_2b_2$. Its dual dessin $D^*$ is shown in the same Figure. The quiver of $D^*$ is not shown in the Figure, but the reader can verify that its quiver is obtained by reversing the orientation of the arrows in $Q_D$, and the set of arrows of $Q_{D^*}$ is decomposed into three disjoint cycles $a^{\textnormal{op}}_1a^{\textnormal{op}}_2$, $b^{\textnormal{op}}_1b^{\textnormal{op}}_2$ and $c^{\textnormal{op}}_1c^{\textnormal{op}}_2$.

\begin{figure}[ht]
\centering
\includegraphics[trim={1.75cm 1.75cm 0 1.75cm},scale=.6]{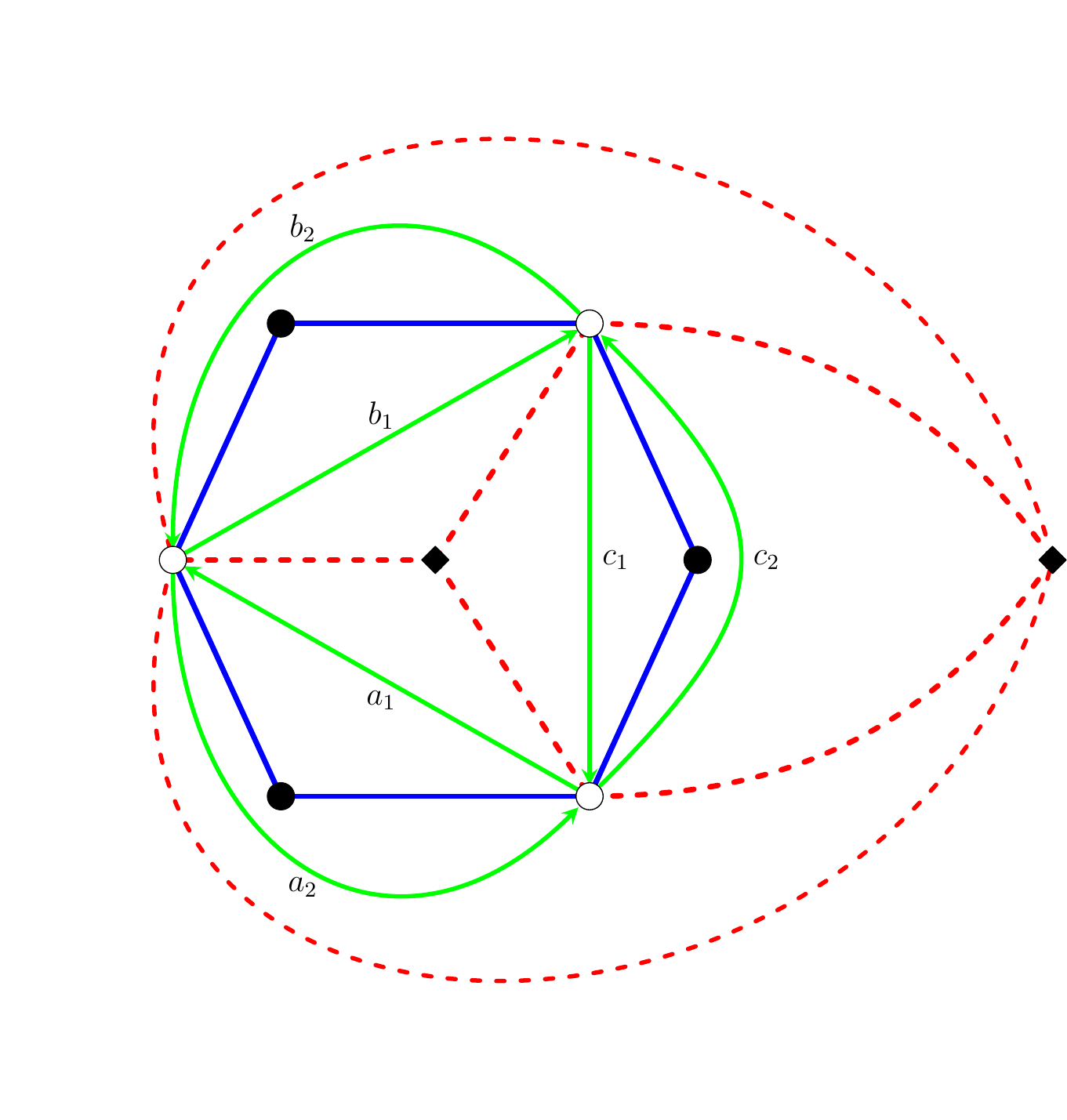}
\caption{The 6-gon dessin (blue), its quiver (green), and its dual dessin (dashed red).}%
\label{fig:example6gon}%
\end{figure}

\end{example}

\begin{example}The dessins $D_n=(\rsphere,z^n)$ are self-dual, i.e.\ $D_n^* \cong D_n$. The Brauer configuration algebra $KQ_{D_n^*}/I_{D^*_n}$ associated to the dual $D^*$ of $D$ is again a symmetric Nakayama algebra. In fact, we have that $KQ_{D_n} / I_{D_n} \cong  KQ_{D_n^*}/I_{D^*_n}$.\end{example}

\section{Galois action on Brauer configuration algebras}

\begin{definition}Let $\mathfrak S$ denote the set of all $\sigma$-cycles. We say that a $\sigma_i$-cycle and a $\sigma_j$-cycle are equivalent if one is a cyclic permutation of the other. An equivalence class represented by a $\sigma_j$-cycle will be denoted $\overline{\sigma_j}$ and let $\overline{\mathfrak S}$ be the set of equivalence classes of $\sigma_j$-cycles.\end{definition}

\begin{lemma}\label{lemma:invariants}Let $D$ be a dessin and $Q_D=(Q_0,Q_1)$ its quiver. The following are Galois invariants:
\begin{itemize}
	\item[(i)] The number of vertices $|Q_0|$ of $Q$.
	\item[(ii)] The number of arrows $|Q_1|$ of $Q$.
	\item[(iii)] The cardinal $|\overline{\mathfrak S}|$ of the set $\overline{\mathfrak S}=\{\overline{\sigma_1},\dots,\overline{\sigma_r}\}$ of equivalence classes of $\sigma$-cycles.
	\item[(iv)] The set $\{|\overline{\sigma_1}|,\dots,|\overline{\sigma_r}|\}$ recording the number of arrows in representatives of equivalence classes of $\sigma$-cycles.
\end{itemize}
\end{lemma}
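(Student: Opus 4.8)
The plan is to reduce all four statements to a single classical fact: that Galois conjugation preserves the cycle types of the permutations $\sigma$ and $\alpha$. Concretely, I would first rewrite each quantity in purely combinatorial terms using the dictionary of Section~\ref{subsection:perm_rep}, in which the white vertices of $D=\per$ are the cycles of $\alpha$ (a degree-$d$ white vertex being a $d$-cycle and a degree-$1$ white vertex a fixed point), the black vertices are the cycles of $\sigma$, and each half-edge $i\in\{1,\dots,n\}$ induces exactly one arrow $\alpha_i\to\alpha_{i^\sigma}$ of $\mathring Q$.

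With this dictionary the four identifications are quick. For (i), $|Q_0|=|\mathring Q_0|$ is the number of white vertices, i.e.\ the number of cycles of $\alpha$. For (ii), since arrows of $\mathring Q$ biject with half-edges we have $|\mathring Q_1|=n$, and $Q_1$ deletes the formal loop set $\cL$, whose elements are exactly the loops coming from black vertices of degree $1$, i.e.\ from fixed points of $\sigma$; hence $|Q_1|=n-\#\{\text{fixed points of }\sigma\}$. For (iv), a special $\sigma_j$-cycle has length $\deg\sigma_j$, so $|\overline{\sigma_j}|$ records the degree of the corresponding black vertex, and the collection $\{|\overline{\sigma_1}|,\dots,|\overline{\sigma_r}|\}$ is read off from the cycle lengths of $\sigma$ that are at least $2$. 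For (iii), the key observation is that $\overline{\mathfrak S}$ is in bijection with the black vertices of degree $\geq 2$: the special $\sigma_j$-cycles at the various white vertices are cyclic rotations of one another and so form a single class $\overline{\sigma_j}$, whereas cycles attached to two distinct black vertices use disjoint arrow sets (each arrow arises from a unique half-edge, lying in a unique cycle of $\sigma$) and so can never be cyclic permutations of each other. Thus $|\overline{\mathfrak S}|$ is the number of cycles of $\sigma$ of length $\geq 2$.

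It remains to record why the cycle types of $\sigma$ and $\alpha$ are Galois invariant; this is the one substantive input and I would treat it as the crux. The multiset of cycle lengths of $\sigma$ (resp.\ $\alpha$) is precisely the multiset of ramification indices of $f$ over the $\mathbb Q$-rational point $0$ (resp.\ $1$), and the action of $\theta\in\absgal$ is by applying a field automorphism to the coefficients of the \belyi pair. Such an automorphism permutes the points of the fibre over a rational value while preserving their ramification indices, so the ramification data over $0$ and $1$ is unchanged in passing from $D$ to $D^\theta$; I would cite the discussion in \cite[ch.\ 2.4.2]{LandoZvonkin}. Since each of (i)--(iv) is a function of these two cycle types together with the invariant degree $n$, the conclusion follows. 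The only place demanding genuine care, rather than bookkeeping against the passport, is the disjointness argument underpinning the bijection in (iii).
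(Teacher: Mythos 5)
Your proof is correct and takes essentially the same route as the paper: identify each of (i)--(iv) with combinatorial passport data of the dessin (number of white vertices, number of half-edges adjusted for formal loops, number of black vertices of degree at least $2$, and the cycle type of $\sigma$) and then invoke the Galois invariance of that data. In fact your bookkeeping is more precise than the paper's own proof, which asserts that $|Q_1|$ equals the total number of half-edges and that the number of cycle classes equals the number of black vertices without accounting for the deletion of formal loops at degree-one black vertices; your corrected identifications $|Q_1|=n-\#\{\text{fixed points of }\sigma\}$ and $|\overline{\mathfrak S}|=\#\{\text{cycles of }\sigma\text{ of length}\geq 2\}$ are still manifestly determined by the degree $n$ and the cycle type of $\sigma$, so the same invariance argument goes through.
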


\begin{proof}That (i-iii) are Galois invariants follows from $|Q_0|$, $|Q_1|$ and $|\mathfrak S|$ being equal to the number of white vertices, the total number of half-edges, and the number of black vertices of $D$, respectively, all of which are Galois invariants. Likewise, (iv) is a Galois invariant because $|\sigma_j|$ is equal to the degree of the black vertex $\sigma_j$, and the degree sequence of $D$ is a Galois invariant.
\end{proof}

Let $D=\per$ be a dessin, $\Lambda_D=KQ_D/I_D$ its associated Brauer configuration algebra. A basis of $\Lambda_D$ is given by the set
\[\{p+I_D\mid p\textnormal{ is a subpath of some $\sigma_j$-cycle in }Q_D\}.\]
The dimension of $\Lambda_D$ is given by the sum
\[\dim_K\Lambda_D=2|Q_0|+\sum_{\substack{\overline{C_i}\in\overline{\mathfrak S}}}|\overline{C_i}|(|\overline{C_i}|-1).\]
For a detailed discussion about a basis of a Brauer configuration algebra in general see \cite{GreenSchrollBrauerConfig}.

\begin{example}Let $D$ be the dessin from example \ref{example3}, let $\Lambda_D=KQ_D/I_D$ be the associated Brauer configuration algebra, and for a path $p$ in $Q_D$ let $\overline{p}=p+I_D$. A basis of $\Lambda_D$ is given by the trivial paths $\overline{e_1}$, $\overline{e_2}$, $\overline{e_3}$, $\overline{e_4}$ and $\overline{e_5}$, one for each vertex of $Q_D$, and the non-trivial paths
\begin{itemize}
	\item[(i)] $\overline{r_1}$, $\overline{r_1r_2}$, $\overline{r_1r_2r_3}$, $\overline{r_1r_2r_3r_4}$;\\
	$\overline{r_2}$, $\overline{r_2r_3}$, $\overline{r_2r_3r_4}$, $\overline{r_2r_3r_4r_5}$;\\
	$\overline{r_3}$, $\overline{r_3r_4}$, $\overline{r_3r_4r_5}$, $\overline{r_3r_4r_5r_1}$;\\
	$\overline{r_4}$, $\overline{r_4r_5}$, $\overline{r_4r_5r_1}$, $\overline{r_4r_5r_1r_2}$;\\
	$\overline{r_5}$, $\overline{r_5r_1}$, $\overline{r_5r_1r_2}$, $\overline{r_5r_1r_2r_3}$;\\	
	$\overline{g_1}$, $\overline{g_2}$;\\
	$\overline{b_1}$, $\overline{b_2}$;
	\item[(ii)] $\overline{r_1r_2r_3r_4r_5}$, $\overline{r_2r_3r_4r_5r_1}$, $\overline{r_3r_4r_5r_1r_2}$, $\overline{r_4r_5r_1r_2r_3}$, $\overline{g_2g_1}$.
\end{itemize}
The paths (i) are images of proper subpaths of $\sigma_j$-cycles, however note that in (ii) we have $\overline{r_3r_4r_5r_1r_2}=\overline{g_1g_2}=\overline{b_1b_2}$, $\overline{r_4r_5r_1r_2r_3}=\overline{r_5r_1r_2r_3r_4}$ and $\overline{g_2g_1}=\overline{b_2b_1}$. The dimension of $\Lambda_D$ is $2\cdot 5+5\cdot 4+2+2=34$.
\end{example}

\begin{theorem}Let $D=\per$ be a dessin and $\Lambda_D=KQ_D/I_D$ the associated Brauer configuration algebra. The dimension $\dim_K\Lambda_D$ of $\Lambda_D$ is a Galois invariant.
\end{theorem}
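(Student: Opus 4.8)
The plan is to show that the explicit dimension formula
\[
\dim_K\Lambda_D = 2|Q_0| + \sum_{\overline{C_i}\in\overline{\mathfrak S}} |\overline{C_i}|\bigl(|\overline{C_i}|-1\bigr)
\]
stated just before the theorem is assembled entirely out of quantities already proved to be Galois invariant in Lemma~\ref{lemma:invariants}. Since $\dim_K\Lambda_D$ is determined by this formula, it suffices to check that each ingredient is unchanged under the action of any $\theta\in\absgal$.

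First I would recall that applying $\theta$ to $D=\per$ produces a conjugate dessin $D^\theta=(X^\theta,f^\theta)$, and that the combinatorial data entering the formula --- the number of white vertices, the number of half-edges, the number of black vertices, and the multiset of black-vertex degrees --- are precisely the standard Galois invariants of a dessin (its \emph{passport}/degree sequence). Concretely, by Lemma~\ref{lemma:invariants}(i) the term $|Q_0|$ equals the number of white vertices and is invariant; by parts (iii) and (iv) the index set $\overline{\mathfrak S}$ of equivalence classes of $\sigma$-cycles has invariant cardinality, and the multiset $\{|\overline{C_i}|\}$ of their lengths (equal to the degrees of the black vertices of $D$) is invariant.

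Then I would simply observe that the right-hand side of the dimension formula is a fixed symmetric function of exactly these data: $2|Q_0|$ depends only on the invariant $|Q_0|$, and $\sum_{\overline{C_i}} |\overline{C_i}|(|\overline{C_i}|-1)$ depends only on the invariant multiset $\{|\overline{C_i}|\}$ through the function $x\mapsto x(x-1)$ summed over its elements. Since $D^\theta$ has the same white-vertex count and the same multiset of black-vertex degrees as $D$, its associated Brauer configuration algebra $\Lambda_{D^\theta}$ has the same value of the formula, hence $\dim_K\Lambda_{D^\theta}=\dim_K\Lambda_D$. This establishes the claim.

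There is essentially no hard analytic step here; the real content was already discharged in Lemma~\ref{lemma:invariants} and in the derivation of the basis/dimension formula. The one point deserving care --- and the only place a reader could object --- is the legitimacy of reading off $\dim_K\Lambda_D$ from the displayed formula rather than directly from the action on the algebra: one must note that the Galois action permutes dessins but does \emph{not} act $K$-linearly on a fixed algebra, so the argument is that $\theta$ sends $\Lambda_D$ to $\Lambda_{D^\theta}$ and that these two algebras, though generally non-isomorphic, have equal dimension because their defining invariants coincide. I would state this explicitly to avoid the misreading that one is asserting $\Lambda_D\cong\Lambda_{D^\theta}$.
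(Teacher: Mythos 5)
Your proof is correct and follows essentially the same route as the paper: both cite the dimension formula $\dim_K\Lambda_D=2|Q_0|+\sum_{\overline{C_i}\in\overline{\mathfrak S}}|\overline{C_i}|(|\overline{C_i}|-1)$ and then invoke Lemma~\ref{lemma:invariants} to conclude that each ingredient is Galois invariant. Your added remark distinguishing equality of dimensions from an isomorphism $\Lambda_D\cong\Lambda_{D^\theta}$ is a sensible clarification, but it does not change the argument, which is the paper's own.
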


\begin{proof}The dimension $\dim_K\Lambda_D$ is given by the sum 
\[\dim_K\Lambda_D=2|Q_0|+\sum_{\substack{\overline{C_i}\in\overline{\mathfrak S}}}|\overline{C_i}|(|\overline{C_i}|-1).\]
By the lemma above, the numbers $|Q_0|$ and $|\overline{C_i}|$ are Galois invariant. Therefore, the dimension of $\Lambda_D=KQ_D/I_D$ is a Galois invariant. 
\end{proof}

A basis of the centre $Z(\Lambda_D)$ of $\Lambda_D=KQ_D/I_D$ is given by the set
\[\{1_\Lambda + I_D\}\cup\{p+I_D\mid p\text{ is a special cycle in $Q_D$}\}\cup \mathscr L_D,\]
where $1_\Lambda=e_1+\cdots+e_{|Q_0|}$ is the identity of $\Lambda_D$ and $\mathscr L_D$ is the set of loops in $Q_D$. The dimension $\dim_KZ(\Lambda_D)$ is given by the sum
\[\dim_KZ(\Lambda_D)=1+|Q_0|+|\mathscr L_D|.\]
For a detailed discussion about the centre of a Brauer configuration algebra see \cite{Sierra}.

\begin{example}Let $D$ once again be the dessin from example \ref{example3}, let $\Lambda_D=KQ_D/I_D$ be the associated Brauer configuration algebra, and, for a path $p$ in $Q_D$, let $\overline{p}=p+I_D$. A basis of $Z(\Lambda_D)$ is given by the identity $\overline{1}_\Lambda$, the loop $\overline{r_4}$ and the special cycles
\[\overline{r_1r_2r_3r_4r_5},
	\overline{r_2r_3r_4r_5r_1},
	\overline{r_3r_4r_5r_1r_2},
	\overline{r_4r_5r_1r_2r_3},
	\overline{g_2g_1}.\]
	Note that $\overline{r_3r_4r_5r_1r_2}=\overline{b_1b_2}=\overline{g_1g_2}$, $\overline{r_4r_5r_1r_2r_3}=\overline{r_5r_1r_2r_3r_4}$ and $\overline{g_2g_1}=\overline{b_2b_1}$. The dimension is therefore $1+5+1=7$.
\end{example}

\begin{proposition}\label{proposition:basisdimension}Let $D$ be a dessin and $\Lambda_D=KQ_D/I_D$ the associated Brauer configuration algebra. The dimension of the centre $\dim_KZ(\Lambda_D)$ of $\Lambda_D$ is a Galois invariant.
\end{proposition}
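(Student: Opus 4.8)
The plan is to show that each of the three summands in the formula
\[
\dim_K Z(\Lambda_D)=1+|Q_0|+|\mathscr L_D|
\]
is individually a Galois invariant, and then conclude by additivity. The constant $1$ is trivially invariant. The quantity $|Q_0|$ is already established to be a Galois invariant in Lemma~\ref{lemma:invariants}(i), since it equals the number of white vertices of $D$, and the degree sequence and vertex counts of a dessin are classical Galois invariants. So the entire content of the proposition reduces to proving that $|\mathscr L_D|$, the number of loops in $Q_D$, is a Galois invariant.

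The key step, then, is to give a combinatorial description of $|\mathscr L_D|$ in terms of data on the dessin that is known to be Galois invariant. A loop in $Q_D$ arises at a white vertex $\alpha_k$ precisely when some cycle of $\sigma$ sends a half-edge $i$ at $\alpha_k$ to another half-edge $i^\sigma$ also incident to $\alpha_k$; that is, a loop corresponds to a black vertex of degree at least $2$ having two \emph{consecutive} (in the cyclic $\sigma$-ordering) half-edges attached to the same white vertex, as illustrated by $a_4$ in Figure~\ref{fig:example2}. Equivalently, a loop records an incidence in which a black vertex and a white vertex share two consecutive edges of the dessin. One must be careful to exclude the formal loop arrows coming from black vertices of degree $1$, since those are removed in passing from $\mathring Q$ to $Q_D$; but this exclusion is itself determined by the degree-$1$ black vertices, whose number is part of the degree sequence. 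The cleanest route is to observe that $|\mathscr L_D|$ counts the number of fixed points of the permutation obtained by restricting attention to the relevant adjacency structure, and that this count depends only on the conjugacy class of the triple $\per$, i.e.\ only on the isomorphism class of the dessin as an abstract combinatorial object.

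The main obstacle I anticipate is making precise and Galois-invariant the notion of ``two consecutive half-edges of a black vertex being incident to the same white vertex.'' The number of loops is not simply read off from the degree sequence; it depends on the finer incidence pattern between the cycles of $\sigma$ and the cycles of $\alpha$. The resolution is that the Galois action permutes dessins only within their isomorphism class as permutation triples up to simultaneous conjugation --- more precisely, $D^\theta$ has the same underlying bipartite map structure invariants (genus, valency lists, and all purely combinatorial monodromy data) as $D$ --- and the count of such consecutive-shared-edge incidences is manifestly invariant under simultaneous conjugation of $\per$. Since the Galois action does not change this combinatorial cycle structure (it acts on the defining coefficients but preserves the monodromy type), $|\mathscr L_D|$ is preserved.

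With these pieces in place, the proof is short: I would first cite the basis of $Z(\Lambda_D)$ and the dimension formula stated just above the proposition, then invoke Lemma~\ref{lemma:invariants}(i) for the invariance of $|Q_0|$, then argue that $|\mathscr L_D|$ is a combinatorial invariant of the dessin --- being the number of black-white incidences in which a black vertex of degree $\ge 2$ contributes two consecutive half-edges to a single white vertex --- and finally conclude that the sum $1+|Q_0|+|\mathscr L_D|$ is Galois invariant. I expect the write-up to hinge almost entirely on articulating the combinatorial meaning of $\mathscr L_D$ cleanly enough that its invariance becomes evident.
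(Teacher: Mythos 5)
Your reduction is exactly the paper's: quote the formula $\dim_K Z(\Lambda_D)=1+|Q_0|+|\mathscr L_D|$ and prove each summand invariant. You are also right that the whole weight of the statement then rests on the invariance of $|\mathscr L_D|$, which is the one summand that Lemma~\ref{lemma:invariants} does not address (the paper's own proof passes over this silently). The problem is the argument you offer for that step. You assert that ``the Galois action permutes dessins only within their isomorphism class as permutation triples up to simultaneous conjugation'' and that it preserves ``all purely combinatorial monodromy data''. This is false, and if it were true the theory would collapse: a dessin \emph{is}, up to isomorphism, nothing but a transitive triple $\per$ up to simultaneous conjugation, so an action preserving that class would fix every dessin, contradicting the faithfulness of the $\absgal$-action recalled in Section~\ref{subsection: Galois action}. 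Galois conjugation preserves only a restricted list of invariants (degree, genus, passport, monodromy group, automorphism group, \dots); whether two $\sigma$-consecutive half-edges lie at the same white vertex is exactly the kind of finer incidence data that can change, so it cannot be dismissed as ``manifestly invariant''.

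Worse, the gap cannot be closed, because $|\mathscr L_D|$ is genuinely not a Galois invariant: the conjugate pair of Example~\ref{exmp1} is a counterexample. The dessin of $g$ is a bigon through $z=1$ and $z=3-2\sqrt 3$ enclosing the pole $z=0$, together with a path from $z=1$ through $z=3+2\sqrt 3$ to $z=9$; the two conjugate dessins are its pullbacks along the double cover $\pi_x$, and they differ in which of the two white vertices is a branch point of $\pi_x$. Carrying this out, one finds, up to relabelling, $\sigma=(1\,2\,3\,4\,5\,6)(7)(8)$ for both dessins, with $\alpha=(1\,3)(4\,6)(2\,7\,5\,8)$ for one and $\alpha=(1\,3\,4\,6)(2\,7)(5\,8)$ for its conjugate (both have passport $[(6,1,1);(4,2,2);(6,2)]$ and genus $1$). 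In the first, no pair $\{i,i^{\sigma}\}$ lies in a single $\alpha$-cycle, so $|\mathscr L|=0$; in the second, the pairs $\{3,4\}$ and $\{6,1\}$ both lie in the $\alpha$-cycle $(1\,3\,4\,6)$, so $|\mathscr L|=2$. By the very formula you and the paper both quote, the two centres have dimensions $1+3+0=4$ and $1+3+2=6$, and one can check this directly on the algebras: in the second dessin the two length-five cycles complementary to the loops are central, while the first dessin has no central elements beyond the identity and the special cycles. So the step your proof needs is not merely unproven but false, and no cleaner articulation of the combinatorial meaning of $\mathscr L_D$ can rescue the argument; any correct treatment of this proposition has to restrict the class of dessins or reformulate the statement.
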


\begin{proof}The dimension $\dim_KZ(\Lambda_D)$ is given by the sum
\[\dim_KZ(\Lambda_D)=1+|Q_0|+|\mathscr L_D|.\]
By lemma \ref{lemma:invariants}, all the summands are invariants and therefore the dimension $\dim_KZ(\Lambda_D)$ of the centre $Z(\Lambda_D)$ of $\Lambda_D$ is a Galois invariant. 
\end{proof}

In fact, a stronger result holds, namely Galois-conjugate dessins have isomorphic centres. In order to prove this, we need the following technical result. 

\begin{lemma}\label{lemma:basis}Let $\Lambda=kQ/I$ be a Brauer configuration algebra with multiplicity function $\mu=1$. Let $B$ be the basis of $Z(\Lambda)$ given by the identity, the loops and the special cycles of $Q$.
If $\bar p,\bar q\in B$ such that neither is the identity, then $\bar p \bar q =\bar 0$.
\end{lemma}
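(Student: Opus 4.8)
The plan is to reduce the lemma to the single claim that $B\setminus\{1\}$ is contained in the socle $\operatorname{soc}(\Lambda)$. Granting this, the statement is immediate: every element of $B\setminus\{1\}$ has positive length and hence lies in $\rad\Lambda$, and since $\Lambda$ is symmetric its left and right socles coincide, so $\operatorname{soc}(\Lambda)\cdot\rad\Lambda=\rad\Lambda\cdot\operatorname{soc}(\Lambda)=0$; thus $\bar p\bar q=0$ whenever $\bar p\in\operatorname{soc}(\Lambda)$ and $\bar q\in\rad\Lambda$. Throughout I would use two facts recorded before the lemma: a class $\bar p$ is nonzero exactly when $p$ is equivalent to a subpath of a special cycle, and $\bar p\bar q=\overline{pq}$ is the class of the concatenation, which is automatically zero unless the target of $p$ equals the source of $q$. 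As every element of $B\setminus\{1\}$ is a closed walk based at a single white vertex --- a loop or a special cycle --- I may assume $p$ and $q$ are based at the same white vertex $\alpha$, the product being zero for trivial reasons otherwise.

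First I would show that every special cycle lies in $\operatorname{soc}(\Lambda)$. Let $C$ be a special $\sigma_j$-cycle based at $\alpha$ and let $a$ be an arrow with source $\alpha$. If $a$ is the first arrow of $C$, then $Ca$ is a relation of type two and $\overline{Ca}=0$. If $a$ is not the first arrow of $C$, then the length-two junction consisting of the last arrow of $C$ followed by $a$ is not an interior subpath of any special cycle: either $a$ comes from a black vertex other than $\sigma_j$, so the junction mixes two distinct $\sigma$-cycles, or $a$ comes from $\sigma_j$ but is not the successor of the last arrow of $C$. In either case the junction is a relation of type three, so $\overline{Ca}=0$. Hence $C$ right-annihilates every arrow, and by symmetry $C\in\operatorname{soc}(\Lambda)$. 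This disposes of every product in which at least one factor is a special cycle.

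It remains to place the loops of $B$ in $\operatorname{soc}(\Lambda)$, and this is where I expect the real work. Let $\ell\in B$ be a loop at $\alpha$; as $\ell$ is central, $\ell a=a\ell$ for every arrow $a$. If $\ell$ and $a$ lie in distinct $\sigma$-cycles, the junction $\ell a$ is a relation of type three and $\overline{\ell a}=0$, so the only way $\overline{\ell a}$ could survive is for $a$ to be the successor of $\ell$ inside a common $\sigma$-cycle; comparing $\ell a$ against $a\ell$ through the relations of types two and three then forces the configuration to be a loop that is its own special cycle, namely the loop attached to a truncated vertex of Remark~\ref{remark:truncated}. For such a loop one has $\ell^2=0$ by a relation of type two and $\ell a=a\ell=0$ for every other arrow $a$ by a relation of type three, so $\ell\in\operatorname{soc}(\Lambda)$. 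The delicate point, and the main obstacle, is exactly this loop--loop interaction at a shared vertex: one must verify that two central loops can never yield a nonzero junction $\overline{\ell\ell'}$, equivalently that the loops appearing in $B$ are precisely the socle loops. With this established, $B\setminus\{1\}\subseteq\operatorname{soc}(\Lambda)$ and the lemma follows.
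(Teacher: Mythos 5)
Your socle reduction is sound as far as it goes, and for every product having a special cycle as a factor it works and is arguably cleaner than the paper's own argument (the paper instead observes that two non-loop elements of $B$ based at the same vertex are identified by a relation of type one, so that $\bar p\bar q=\bar p\bar p$, which vanishes because $pp$ contains a relation of type two). The gap is the loop case, which you defer, and it is not a gap your strategy can close: the statement you reduce everything to, namely $B\setminus\{\bar 1\}\subseteq\operatorname{soc}(\Lambda)$, is false whenever $Q$ contains a loop. Since $\mu\equiv 1$, truncated vertices contribute no arrows at all --- the formal loops of Remark~\ref{remark:truncated} are precisely the arrows that are \emph{deleted} from $\mathring Q$ to form $Q$ --- so every loop $\ell$ of $Q$ is an honest arrow of a special cycle $\ell c_2\cdots c_k$ with $k\geq 2$, and then $\bar\ell\,\bar c_2=\overline{\ell c_2}$ is a nonzero basis element of $\Lambda$: a proper subpath of a special cycle when $k\geq 3$, and the socle element itself when $k=2$. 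Concretely, for the dessin of Example~\ref{example3} the loop $r_4$ satisfies $\overline{r_4r_5}\neq \bar 0$, so $r_4\notin\operatorname{soc}(\Lambda)$. Thus the loops that actually occur in $B$ are never socle elements, while the loops your argument would place in $B$ (those that are their own special cycles) do not exist in $Q$ by construction; your proposed resolution has the situation exactly backwards.

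For the record, the paper treats a loop $p$ by direct relation-chasing rather than through the socle: if $q$ begins with $p$ then $pq$ begins with $p^2$, a relation; if $q$ ends with $p$ then $pq$ is a full special cycle followed by its own first arrow, a relation of type two; and a junction of $p$ with an arrow not adjacent to $p$ in its $\sigma$-cycle is a relation of type three. Finally, your instinct that the loop--loop interaction is the real obstacle is correct in a stronger sense than you suggest: it does not merely require care, it genuinely fails in one configuration. If a black vertex has degree $2$ and both of its half-edges end at the same white vertex, the two resulting loops $\ell_1,\ell_2$ of $Q$ commute with everything (their two products are the two special cycles at that vertex, identified by a relation of type one), so both are central, both belong to $B$, and $\bar\ell_1\bar\ell_2$ \emph{is} the nonzero socle element at that vertex. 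In that configuration the asserted vanishing is false for any method of proof; this case would have to be excluded by hypothesis, and neither your argument nor the paper's accounts for it.
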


\begin{proof}If $p$ and $q$ are not loops and $t(p) = s(q)$ then $p-q$ is necessarily a relation in $I$. Therefore $\bar p =\bar q$ and $\bar p \bar q = \bar p \bar p = 0$ as $pp$ necessarily contains a type 2 relation.

If $p$ is a loop and $t(p)=s(q)$ or $t(q)=s(p)$, then $q$ is a basis element with $p$ as either the first or the final arrow. The product $\bar p \bar q$ is $\bar 0$ in the former because $p^2$ is a relation, and in the latter because $pq$ is a type 2 relation.  
\end{proof}

\begin{theorem}Let $D_1$ and $D_2$ be Galois-conjugate dessins with Brauer configuration algebras $\Lambda_1=kQ_1/I_1$ and $\Lambda_2=kQ_2/I_2$, respectively. Then $Z(\Lambda_1)\cong Z(\Lambda_2)$.
\end{theorem}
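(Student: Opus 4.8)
The plan is to show that for any Brauer configuration algebra $\Lambda=kQ/I$ with multiplicity function $\mu=1$, the centre $Z(\Lambda)$ is, as a $k$-algebra, a square-zero extension of $k$ — that is, $Z(\Lambda)\cong k\oplus V$ where $V$ is a $k$-vector space on which multiplication is identically zero — and that such an algebra is determined up to isomorphism by the single number $\dim_k V$. Since $\dim_k Z(\Lambda)$ is already known to be a Galois invariant by Proposition \ref{proposition:basisdimension}, this reduces the isomorphism claim to a dimension count and finishes the theorem at once.

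First I would fix the basis $B=\{1_\Lambda\}\cup\{\,\bar p\mid p\text{ a special cycle}\,\}\cup\mathscr L_D$ of $Z(\Lambda)$ recorded above, and set $V=\operatorname{span}_k\bigl(B\setminus\{1_\Lambda\}\bigr)$, the $k$-span of the loops and special cycles. Then $Z(\Lambda)=k\cdot 1_\Lambda\oplus V$ as $k$-vector spaces, with $\dim_k V=\dim_k Z(\Lambda)-1=|Q_0|+|\mathscr L_D|$.

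The crucial structural input is Lemma \ref{lemma:basis}: for any two non-identity elements $\bar p,\bar q\in B$ one has $\bar p\,\bar q=\bar 0$ (this includes the squares $\bar p^2$, since the lemma does not require $\bar p\neq\bar q$, and when $t(p)\neq s(q)$ the product vanishes anyway by concatenation failing). By bilinearity this forces $V\cdot V=0$, so $V$ is a square-zero ideal of the commutative algebra $Z(\Lambda)$ with $Z(\Lambda)/V\cong k$; being nilpotent with semisimple quotient, $V=\rad Z(\Lambda)$. Thus the entire multiplicative structure of $Z(\Lambda)$ is pinned down: $1_\Lambda$ is the unit, $k$ acts by scalars, and every product of radical elements vanishes. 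Consequently $Z(\Lambda)\cong k\oplus V$ as a $k$-algebra, and its isomorphism type depends only on $\dim_k V$.

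To conclude, let $D_1,D_2$ be Galois-conjugate with centres $Z(\Lambda_1)=k\oplus V_1$ and $Z(\Lambda_2)=k\oplus V_2$ in the above form. By Proposition \ref{proposition:basisdimension} the dimensions $\dim_k Z(\Lambda_i)$ coincide, hence $\dim_k V_1=\dim_k V_2$. Choosing any $k$-linear isomorphism $V_1\to V_2$ and extending it by the identity on $k\cdot 1_\Lambda$ yields a $k$-algebra isomorphism $Z(\Lambda_1)\to Z(\Lambda_2)$, since both radicals carry the zero multiplication. I expect the only delicate point — the main obstacle — to be the careful verification that $V$ is genuinely the whole radical and carries trivial multiplication, i.e.\ invoking Lemma \ref{lemma:basis} for every pair of basis elements including squares; once that square-zero description is secured, the isomorphism is a routine linear-algebra extension.
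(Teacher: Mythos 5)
Your proposal is correct and is essentially the paper's own argument: both rest on Lemma \ref{lemma:basis} (all products of non-identity basis elements of $Z(\Lambda)$ vanish) together with Proposition \ref{proposition:basisdimension} (equality of dimensions), and both produce the isomorphism by extending a basis-to-basis linear map, checking multiplicativity via that vanishing. Your packaging of $Z(\Lambda)$ as a square-zero extension $k\oplus V$ determined by $\dim_k V$ is just a cleaner restatement of the computation the paper carries out explicitly, so nothing further is needed.
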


\begin{proof}Let $B_1$ and $B_2$ be the bases of $Z(\Lambda_1)$ and $Z(\Lambda_2)$, respectively, given by the identity, the loops and the special cycles. By proposition \ref{proposition:basisdimension} we have $\dim Z(\Lambda_1) = \dim Z(\Lambda_2)=n$ so there is a vector-space isomorphism $f\colon Z(\Lambda_1)\to Z(\Lambda_2)$ mapping $B_1$ to $B_2$ so that loops are mapped to loops and special cycles to special cycles.

Let $\bar p_i\in B_1$ for $i=2,\dots,n$ be the basis elements of $Z(\Lambda_1)$ which are not $\bar 1_{\Lambda_1}$. Then for $v,w\in Z(\Lambda_1)$ we have
\begin{align*}
v &=\alpha \bar 1_{\Lambda_1} + \sum_{i=2}^n \alpha_i \bar p_i,\\
w &=\beta \bar 1_{\Lambda_1} + \sum_{i=2}^n \beta_i \bar p_i,
\end{align*}
where $\alpha$, $\beta$, $\alpha_i$ and $\beta_i$ for $i=2,\dots,n$ are scalars. Then
\[vw=\alpha\beta \bar 1_{\Lambda_1} + \sum_{i=2}^n(\alpha_i\beta+\alpha\beta_i)\bar p_i\]
due to lemma \ref{lemma:basis}. Then
\begin{align*}
f(vw)&=\alpha\beta \bar 1_{\Lambda_2} + \sum_{i=2}^n(\alpha_i\beta+\alpha\beta_i)f(\bar p_i)\\
&=\left(\alpha f(\bar 1_{\Lambda_1}) + \sum_{i=2}^n \alpha_i f(\bar p_i)\right)\left(\beta f(\bar 1_{\Lambda_1}) + \sum_{i=2}^n \beta_i f(\bar p_i)\right)=f(v)f(w)\end{align*}
again due to lemma \ref{lemma:basis}.
\end{proof}


\begin{thebibliography}{10}


\bibitem{Assem}  Assem, Ibrahim; Simson, Daniel; Skowro\'nski, Andrzej. Elements of the representation theory of associative algebras. Vol. 1. Techniques of representation theory. London Mathematical Society Student Texts, 65. Cambridge University Press, Cambridge, 2006.

\bibitem{belyi80} Bely\u{\i}, Gennadii V. On Galois Extensions of a Maximal Cyclotomic Field. Math. USSR Izvestija 14 (1980) 247--256.

\bibitem{belyi02} Bely\u{\i}, Gennadii V. A New Proof of the Three Point Theorem. Sb. Math. 193(3-4) (2002) 329--332.

\bibitem{bor_jan} Borceux, Francis; Janelidze. George Galois theories. Cambridge University Press, Cambridge, 2001.

\bibitem{Gabriel} Gabriel, Peter. Indecomposable representations II,Symposia Mathematics Instituto Nazionale di allz. Matematica, Roma, 1973.

\bibitem{GGD}  Girondo, Ernesto; Gonz\'alez-Diez, Gabino. Introduction to compact Riemann surfaces and dessins d'enfants. London Mathematical Society Student Texts, 79. Cambridge University Press, Cambridge, 2012.
	
\bibitem{goldring14} Goldring, Wushi. A new proof of Bely\u{\i}'s Theorem. J. Number Theory 135 (2014), 151--154.

\bibitem{GreenSchrollAdvances} Green, Edward L.; Schroll, Sibylle. Multiserial and special multiserial algebras and their representations. Adv. Math. 302 (2016), 1111--1136.

\bibitem{GreenSchrollBrauerConfig} Green, Edward L.; Schroll, Sibylle. Brauer configuration algebras: a generalization of Brauer graph algebras. Bull. Sci. Math. 141 (2017), no. 6, 539--572. 

\bibitem{GSS} Green, Edward L.; Schroll, Sibylle; Snashall, Nicole; Taillefer, Rachel. The Ext algebra of a Brauer graph algebra. J. Noncommut. Geom. 11 (2017), no. 2, 537--579.

\bibitem{Grothendieck} Grothendieck, Alexandre.
Esquisse d'un programme.  [Sketch of a program] 
With an English translation on pp. 243--283. London Math. Soc. Lecture Note Ser., 242, Geometric Galois actions, 1, 5--48, Cambridge Univ. Press, Cambridge, 1997. 

\bibitem{Janusz} Janusz, Gerald. Indecomposable modules for finite groups. Ann. of Math. (2) 89 (1969) 209--241.

\bibitem{JonesWolfart} Jones, Gareth A; Wolfart, J{\"u}rgen. Dessins D'enfants on Riemann Surfaces. Springer Monographs in Mathematics, Springer, cham, 2016.

\bibitem{Klein} Klein, Felix. \"Uber die Transformationen elfter Ordnung der elliptischen
Funktionen”. Mathematische Annalen 15.3 (1879), 533--555.

\bibitem{LandoZvonkin} Lando, Sergei K.; Zvonkin, Alexander K. Graphs on surfaces and their applications. With an appendix by Don B. Zagier. Encyclopaedia of Mathematical Sciences, 141. Low-Dimensional Topology, II. Springer-Verlag, Berlin, 2004.

\bibitem{Malic1} Malic, Goran. 
Dessins, their delta-matroids and partial duals. Symmetries in graphs, maps, and polytopes, 213--247, 
Springer Proc. Math. Stat., 159, 2016. 

\bibitem{MS2} Malic, Goran; Schroll, Sibylle. Dessins d'enfants, Brauer graph algebras and Galois invariants, arXiv:1902.09876.

\bibitem{Roggenkamp} Roggenkamp, Klaus. Biserial algebras and graphs. Algebras and
modules, II (Geiranger, 1996), 481--496, CMS Conf. Proc., 24, Amer. Math. Soc.,
Providence, RI, 1998.

\bibitem{Schiffler} Schiffler, Ralf. Quiver representations. CMS Books in Mathematics/Ouvrages de Math\'ematiques de la SMC. Springer, Cham, 2014.

\bibitem{schneps94} Schneps, Leila (Edt.). The {G}rothendieck Theory of Dessins D'Enfants. Vol 200 LMS Lecture Note Series, Cambridge University Press, Cambridge, 1994.
		
\bibitem{schneps_lochak97vol1} Schneps, Leila; Lochak, Pierre. Geometric {G}alois Actions 1. A\-round {G}rothendieck's Esquisse d'un Programme. Vol 242 LMS Lecture Note Series, Cambridge University Press, Cambridge, 1997.

\bibitem{Sierra} Sierra, Alex. The Dimension of the Center of a Brauer Configuration Algebra. J. Algebra 510 (2018), 289--318.

\bibitem{szamuely09} Szamuely, Tam{\'a}s Galois groups and fundamental groups. Vol 117
Cambridge Studies in Advanced Mathematics. Cambridge University~Press, Cambridge, 2009.

\end{thebibliography}
\end{document}